\documentclass[11pt, reqno]{amsart}
\usepackage{amsfonts,amssymb,amscd,amsmath,enumerate,verbatim,calc,amsthm}
\usepackage{amsmath}
\usepackage{amssymb}
\usepackage{amscd}
\usepackage{color}
\usepackage{mathtools}
\usepackage[colorlinks,pagebackref=true]{hyperref}

\makeatletter

\def\widebreve#1{\mathop{\vbox{\m@th\ialign{##\crcr\noalign{\kern3\p@}%
				\brevefill\crcr\noalign{\kern3\p@\nointerlineskip}%
				$\hfil\displaystyle{#1}\hfil$\crcr}}}\limits}

\def\brevefill{$\m@th \setbox\z@\hbox{$\braceld$}%
	\bracelu\leaders\vrule \@height\ht\z@ \@depth\z@\hfill\braceru$}
\makeatletter

\def\@citecolor{blue}
\def\@linkcolor{blue}
\def\@urlcolor{blue}
\def\@urlcolor{blue}

\def\NZQ{\mathbb}               
\def\NN{{\NZQ N}}
\def\QQ{{\NZQ Q}}
\def\ZZ{{\NZQ Z}}
\def\RR{{\NZQ R}}

\def\mfp{\mathfrak p}

\def \mm{\mathfrak {m}}
\def\Ass{\operatorname{Ass}}
\def\Min{\operatorname{Min}}
\def\sat{\operatorname{sat}}
\def\height{\operatorname{height}}

\def\liminf{\operatorname {liminf}}
\def\limsup{\operatorname {limsup}}

\newtheorem{Theorem}{Theorem}[section]

\newtheorem{Corollary}[Theorem]{Corollary}

\newtheorem{Remark}[Theorem]{Remark}

\newtheorem{Example}[Theorem]{Example}

\newtheorem{Definition}[Theorem]{Definition}

\newtheorem{Question}[Theorem]{Question}
\let\epsilon\varepsilon
\let\phi=\varphi
\let\kappa=\varkappa

\begin{document}
	\title{ Epsilon multiplicity, multiplicity=volume formula and analytic spread of family of ideals}
	\author{Parangama Sarkar}
	\address{Parangama Sarkar,  Department of Mathematics,
		Indian Institute of Technology, Palakkad, India}
	\email{parangamasarkar@gmail.com, parangama@iitpkd.ac.in}
	\begin{abstract}  In an analytically unramified local ring $(R,\mm)$ of dimension $d\geq 1$, for a  filtration of ideals $\mathcal I=\{I_m\}_{m\in\NN}$ satisfying $\mathcal A(r)$ condition and  for any $\mm$-primary ideal $K$, it is shown  in \cite{Pa2025} that the epsilon multiplicity of the weakly graded family of ideals  $\{(I_m:K)\}_{m\in\NN}$ exists as a limit and it is bounded above by the epsilon multiplicity of $\mathcal I$, $\epsilon(\mathcal I)$. In this article, we first show that $\epsilon(\mathcal I)$ coincides with the epsilon multiplicity of $\{(I_m:K)\}_{m\in\NN}$ and this leads to the following: $(a)$ an expression for $\epsilon(\mathcal I)$ as a limit of the epsilon multiplicities of other graded families of ideals and $(b)$ a multiplicity=volume formula for the epsilon multiplicity of an ideal $I$ in $R$. In the final part of the article, we investigate the maximality of the analytic spread of filtrations of ideals.
\end{abstract}
\keywords{epsilon multiplicity, analytic spread, filtration of ideals, graded and weakly graded families of ideals, divisorial filtrations}
\subjclass[2010]{13A18, 13A30, 13H15}
\maketitle
\section{Introduction}
Let $(R,\mm)$ be a Noetherian local ring of dimension $d$ and $I$ be an ideal in $R.$ 
The epsilon multiplicity of an ideal $I$ in a local ring $R$ is defined as
$$
\epsilon(I)=d!\limsup_{m\to\infty}\frac{\lambda_R(H^0_{\mm}(R/I^m))}{m^d}
$$ (here the length of an $R$-module $M$ is denoted by $\lambda_R(M)$) and it is a finite real number \cite{UV}, \cite{KV}. If $I$ is $\mm$-primary then for all large $m,$  the function $\lambda_R(H^0_{\mm}(R/I^m))=\lambda_R({R}/{I^m})$ coincides with a polynomial in $m$ of degree $d$  and hence $\epsilon(I)=e(I)$ is a limit and a positive integer where $e(I)$ is the Hilbert-Samuel multiplicity of $I$ \cite{hilb}, \cite{samuel}. The question of when the epsilon multiplicity exists as a limit has been investigated by several mathematicians \cite{CHST05}, \cite{HPV}, \cite{CHS}. In 2014, Cutkosky proved this in greater generality. 
\begin{Theorem}{\em(\cite[Corollary 6.3]{C1})}\label{TheoremI1} Suppose that $I$ is an ideal in an analytically unramified local ring $R$. Then $\epsilon(I)$ is actually a limit, 
	$$
	\epsilon(I)
	=d!\lim_{m\rightarrow \infty}\frac{\lambda_R(H^0_{\mm}(R/I^m))}{m^d}.
	$$ 
\end{Theorem}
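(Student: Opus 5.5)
The plan follows Cutkosky's strategy: rewrite the local cohomology lengths as differences of colengths of $\mm$-primary ideals, and then apply the volume (Okounkov body) theory for graded families. \textbf{Step 1} removes the local cohomology. Write $H^0_{\mm}(R/I^m)=(I^m)^{\sat}/I^m$, where $(I^m)^{\sat}=I^m:\mm^\infty$. By the Artin--Rees lemma applied to the Rees algebra of $I$ (equivalently, Swanson's linear bound on primary decompositions), there is a constant $c$ with $(I^m)^{\sat}=I^m:\mm^{cm}$ and $(I^m)^{\sat}\cap \mm^{cm}\subseteq I^m$ for all $m$. Hence
$$\lambda_R(H^0_{\mm}(R/I^m))=\lambda_R\bigl(R/(I^m+\mm^{cm})\bigr)-\lambda_R\bigl(R/((I^m)^{\sat}+\mm^{cm})\bigr).$$
It therefore suffices to show that $\lim_m \lambda_R(R/J_m)/m^d$ exists for each of the two families $J_m=I^m+\mm^{cm}$ and $J_m=(I^m)^{\sat}+\mm^{cm}$. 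Both families are graded, meaning $J_aJ_b\subseteq J_{a+b}$; for the second this holds because saturations multiply into saturations. Both are also $\mm$-primary, and each contains $\mm^{cm}$.

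\textbf{Step 2} proves the existence of these limits for graded families of $\mm$-primary ideals with $\mm^{cm}\subseteq J_m$ in an analytically unramified local ring. First pass to the completion $\hat R$; lengths are unchanged, and $\hat R$ is reduced. Next reduce to a domain. Let $P_1,\dots,P_s$ be the minimal primes of $\hat R$, and compare $\lambda(R/J_m)$ with $\sum_i\lambda(\hat R/P_i+J_m\hat R)$. Because $\hat R$ is reduced, the error is controlled by the kernel and cokernel of $\hat R\to\oplus \hat R/P_i$, which are modules of dimension $<d$. Using $\mm^{cm}\subseteq J_m$, their contributions are $O(m^{d-1})$, so only the components of dimension $d$ matter. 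Then replace each domain $\hat R/P_i$ by its normalization, which is finite over it because $\hat R/P_i$ is analytically unramified. Choose a valuation $\nu$ dominating it with rank-one, rational-rank-$d$ value group and residue field finite over $R/\mm$, for instance one obtained from a flag on a regular local ring of a suitable birational model.

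\textbf{Step 3} constructs the semigroups. Define $\Gamma=\{(\nu(f),m): f\in J_m\}$ and the analogous $\Gamma_0$ built from $\mm^{cm}$, truncated by a linear bound $\nu(f)\le\beta m$. This bound is available because $\mm^{cm}\subseteq J_m$ and $\nu$ is comparable to the $\mm$-adic order (Izumi/Rees-type linear comparison). One then gets $\lambda(R/J_m)=[k_\nu:k]\bigl(\#\Gamma_m'-\#\Gamma_m\bigr)$ for the truncated slices. Applying Kaveh--Khovanskii / Okounkov body asymptotics to each semigroup shows that $\#\Gamma_m/m^d$ converges to the volume of the corresponding convex body. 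This yields the limits, and subtracting them as in Step 1 proves the theorem.

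The main obstacle is Step 2/3: producing a valuation with the linear comparison $\nu(f)\ge \alpha\,\mathrm{ord}_{\mm}(f)$ on an analytically unramified (not necessarily excellent) ring, and making the reduction to the normal domain case rigorous. I would handle the comparison through Rees's valuation theorem on the normalized blow-up of $\mm$, which exists because the completion is reduced. The constant $c$ in Step 1 should also be checked carefully, using Swanson's uniform Artin--Rees bound for $\Ass$ of powers.
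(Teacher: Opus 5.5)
The paper does not prove this statement. It quotes it from Cutkosky: Swanson's theorem \cite[Theorem 3.4]{Sw} gives property $\mathcal A(r)$ for $\{I^m\}$, and \cite[Theorem 6.1]{C1} gives the existence of $\lim_m \lambda_R(I_m^{\sat}/I_m)/m^d$ for graded families with that property, plus a condition on minimal primes. Your architecture (Swanson's bound, then valuation/Okounkov-body counting on the components of $\hat R$) matches this. However, your Step 1 rests on a false claim: the families $\{I^m+\mm^{cm}\}$ and $\{(I^m)^{\sat}+\mm^{cm}\}$ are in general \emph{not} graded. With $A_m=I^m$ or $(I^m)^{\sat}$, the product $(A_a+\mm^{ca})(A_b+\mm^{cb})$ contains $\mm^{ca}A_b$, which need not lie in $A_{a+b}$ or in $\mm^{c(a+b)}$.

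Concretely, take $R=k[x,y]_{(x,y)}$ and $I=(x^2,xy)$. Then $I^m=x^m\mm^m$ and $(I^m)^{\sat}=x^mR$, and $(I^m)^{\sat}\cap\mm^{cm}\subseteq I^m$ holds exactly when $c\ge 2$. For every such $c$, $xy^c\in\bigl((x)+\mm^c\bigr)^2$, but $xy^c\notin(x^2)+\mm^{2c}$, since it is a monomial of degree $c+1<2c$ not divisible by $x^2$. So the theorem for graded families of $\mm$-primary ideals cannot be invoked. Step 3 fails for the same reason: for these $J_m$, the truncated value sets $\{(\nu(f),m): f\in J_m,\ \nu(f)<\beta m\}$ need not be semigroups.

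The repair is to count values only of genuinely graded, non-$\mm$-primary families. Your length identity is still a fine way to pass to a component $\bar R=\hat R/P$, because that comparison uses only $\mm^{cm}\subseteq J_m$, not gradedness. On $\bar R$, put $\bar A_m=A_m\bar R$ and $K_{\beta m}=\{f:\nu(f)\ge\beta m\}\subseteq\bar\mm^{cm}$, and write
$$\lambda\bigl(\bar R/(\bar A_m+\bar\mm^{cm})\bigr)=\lambda(\bar R/\bar\mm^{cm})-\lambda\bigl(\bar A_m/(\bar A_m\cap K_{\beta m})\bigr)+\lambda\bigl((\bar A_m\cap\bar\mm^{cm})/(\bar A_m\cap K_{\beta m})\bigr).$$
Both $\{\bar A_m\}$ and $\{\bar A_m\cap\bar\mm^{cm}\}$ are graded. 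For a graded family $\{B_m\}$ in a domain, $\{(\nu(f),m): 0\neq f\in B_m,\ \nu(f)<\beta m\}$ is a semigroup, because the truncation is a cone condition. You must also keep these semigroups from degenerating in the degree direction, so that the limit exists along all $m$ rather than only along multiples of an index. If $P\supseteq I$, both families vanish on $\bar R$ and the two Hilbert--Samuel terms cancel. Otherwise, powers of a fixed nonzero element of $I\bar R$ give points in every degree. This is the role of the minimal-prime hypothesis in \cite[Theorem 6.1]{C1}, which the paper checks for $\mathcal F(n)$ before applying it.

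Two smaller points. First, the comparison you actually need is $K_{\beta m}\subseteq\mm^{cm}$, an Izumi/Rees bound of the form $\nu(f)\le C\,\mathrm{ord}_{\mm}(f)$. The inequality $\nu(f)\ge\alpha\,\mathrm{ord}_{\mm}(f)$ that you single out is the trivial direction. Second, when $k_\nu\ne R/\mm$ you cannot just multiply the point count by $[k_\nu:R/\mm]$. A leaf $(B\cap\{\nu\ge v\})/(B\cap\{\nu>v\})$ of an arbitrary ideal $B$ can have smaller dimension, so the count must be organized by leaf dimension.
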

Moreover, in \cite{CHST05}, Cutkosky et al. showed that this limit can be irrational. However, for monomial ideals and determinantal ideals, the  epsilon multiplicity is known to be a rational number \cite{HPV}, \cite{JMV}. Recently, Cutkosky and Landsittel provided a ``multiplicity=volume" formula for the epsilon multiplicity of an ideal in terms of the Amao multiplicities.
\begin{Theorem}{\rm{(\cite[Theorem 1.1]{CL2024})}}\label{Th2}
Let $I$ be an ideal in an analytically unramified local ring. Then  $$\epsilon(I)=\lim\limits_{n\to \infty}\frac{1}{n^d}\Big(\lim\limits_{m\to \infty}\frac{\lambda_R(((I^n)^{\sat})^m/(I^n)^m)}{m^d/d!}\Big).$$
\end{Theorem}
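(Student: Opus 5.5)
Here $(I^n)^{\sat}=I^n:\mm^\infty$, which for fixed $n$ equals $I^n:\mm^{c_n}$ for some $c_n$. I would compare both sides with $\lambda_R(H^0_{\mm}(R/I^{nm}))=\lambda_R((I^{nm})^{\sat}/I^{nm})$ and its limit $\epsilon(I)$ from Theorem \ref{TheoremI1}. The inner limit exists because $(I^n)^m$ and $((I^n)^{\sat})^m$ are powers of two ideals with the same saturation behaviour. Its value is the epsilon-type multiplicity
$$e_n:=d!\lim_{m\to\infty}\frac{\lambda_R(((I^n)^{\sat})^m/(I^n)^m)}{m^d}.$$
This can be identified with the Amao multiplicity of the pair $(I^n)^m\subset ((I^n)^{\sat})^m$ once we know the quotient has finite length. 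That follows because $(I^n)^{\sat}/I^n$ is killed by $\mm^{c_n}$, so $((I^n)^{\sat})^m\mm^{c_n m}\subseteq (I^n)^m$; hence the quotient is supported at $\mm$. I would take this existence, and the existence of $\lim_n e_n/n^d$, as coming from the volume machinery (Okounkov bodies of the graded families) that gives Theorem \ref{TheoremI1}.

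\emph{Upper bound.} Since $((I^n)^{\sat})^m\subseteq (I^{nm})^{\sat}$, we get
$$\lambda_R\big(((I^n)^{\sat})^m/I^{nm}\big)\le \lambda_R(H^0_\mm(R/I^{nm})).$$
Dividing by $m^d/d!$ and letting $m\to\infty$ gives $e_n\le n^d\epsilon(I)$. Therefore $\limsup_n e_n/n^d\le\epsilon(I)$.

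\emph{Lower bound.} This is the main obstacle: we need $\lambda_R((I^{nm})^{\sat}/((I^n)^{\sat})^m)$ to be $o(n^dm^d)$ in the appropriate iterated sense. I would pass to the analytically unramified setting and use Cutkosky's approach of embedding into a valuation/semigroup picture. Write $(I^{k})^{\sat}=\bigcap_{\mfp} \big(I^k R_{\mfp}\cap R\big)$ over the non-maximal associated primes, so the saturated powers form a filtration $\mathcal J=\{(I^k)^{\sat}\}$. Both $\lambda_R(R/\ldots)$ quantities can then be realized, after cutting by a fixed power $\mm^{ck}$ (Swanson's linear bound: $(I^k)^{\sat}\cap\mm^{ck}\subseteq I^k$ for some uniform $c$), as counts of points in the truncated semigroups of the families $\{I^k\}$ and $\{(I^k)^{\sat}\}$. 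The difference is a volume difference $\mathrm{vol}(\Delta(I))-\mathrm{vol}(\Delta(\mathcal J))$, computed inside the bounded region determined by $\mm^{c}$.

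The family $\{((I^n)^{\sat})^m\}_m$ has semigroup generated in degree $n$ by the degree-$n$ part of the semigroup of $\mathcal J$. The Okounkov-body approximation theorem (the subsemigroup generated by level $n$ fills out the cone up to volume error tending to $0$ as $n\to\infty$) then shows $e_n/n^d\to$ the volume of $\mathcal J$ relative to $I$, which is $\epsilon(I)$. Making this approximation compatible with the $\mm^{cnm}$ truncation, uniformly in $n$, is the delicate point. I would handle it by fixing the truncation level $c$ from the linear bound, so that all counts happen inside a fixed compact region of the cone, where the approximation lemma applies directly. Combining the two bounds gives the formula.
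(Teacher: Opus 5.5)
Your outline is correct and follows essentially the route of the cited source \cite{CL2024}. The paper does not reprove this statement, but it applies the underlying comparison theorem \cite[Theorem 4.1]{CL2024} in exactly this pattern in the proof of Theorem \ref{Main}: the upper bound comes from $((I^n)^{\sat})^m\subseteq (I^{nm})^{\sat}$, and the lower bound comes from Swanson's uniform linear bound, which gives $((I^n)^{\sat})^m\cap\mm^{cnm}=I^{nm}\cap\mm^{cnm}$ (the analogue of the hypothesis $L(n)_m\cap\mm^{cmn}=T(n)_m\cap\mm^{cmn}$ checked there), so that all lengths become truncated point counts in one fixed cone, followed by Okounkov-body approximation by the subsemigroup generated in degree $n$. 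Two small corrections: the value semigroup of $\{((I^n)^{\sat})^m\}_m$ only \emph{contains}, rather than equals, the subsemigroup generated by the degree-$n$ slice for $\{(I^k)^{\sat}\}$, though containment is all the lower bound needs; and the existence of $\lim_n e_n/n^d$ follows from your two bounds, so it need not be assumed.
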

The epsilon multiplicity of a family  $\mathcal I=\{I_m\}_{m\in\NN}$ of ideals is defined as \begin{equation*}
	\epsilon(\mathcal I):=d!\limsup_{m\to\infty}\frac{\lambda_R(H^0_{\mm}(R/I_m))}{m^d}.
\end{equation*} 
The notion of epsilon multiplicity for a filtration $\mathcal I=\{I_m\}_{m\in\NN}$ (see Definition \ref{fil}) is developed extensively in \cite{CS2024}.
\begin{Definition}{\em	Let $\mathcal I=\{I_m\}_{m\in\NN}$ be a filtration on a local ring $R$ and $r\in \ZZ_{>0}$. We say that $\mathcal I$ satisfies property $\mathcal A(r)$ if 
		$$
		I_m^{\sat}\cap \mm^{rm}=I_m\cap \mm^{rm}  \mbox{ for all $m\in \NN$.}
		$$}
\end{Definition} 
Any discrete valued filtration (see Definition \ref{dis}) in a Noetherian local domain satisfies $\mathcal A(r)$ for some $r\in\ZZ_{>0}$ (\cite[Theorem 3.1]{CS2024}). 
As a consequence of  \cite[Theorem 6.1]{C1}, we get, if $R$ is an analytically unramified  local ring of dimension $d$, and $\mathcal I=\{I_m\}_{m\in\NN}$ is a filtration of ideals in $R$ which satisfies the property $A(r)$ for some $r\in \ZZ_{>0}$, then 
\begin{equation*}\label{eqI6}
	\epsilon(\mathcal I)=d!\lim_{m\rightarrow \infty}\frac{\lambda_R(H^0_{\mm}(R/I_m))}{m^d}
\end{equation*}
is a real number. 
That is, the epsilon multiplicity of $\mathcal I$ exists as a limit. For more details about $\mathcal A(r)$ condition, see \cite{CS2024}. The notion of epsilon multiplicity has been extended to weakly graded family of ideals in \cite{Pa2025}. Let $\mathcal I=\{I_m\}_{m\in\NN}$ be a filtration of ideals in an analytically unramified  local ring $(R,\mm)$ of dimension $d\geq 1$ and $\mathcal I$ satisfy $\mathcal A(r)$ for some $r\in \ZZ_{>0}$. Consider the weakly graded family of ideals (see Definition \ref{wg}) of the form  $\{(I_m:K)\}_{m\in\NN}$ where $K$ is any $\mm$-primary ideal. Recently, in \cite[Theorem 4.4]{Pa2025}, the author proved  the following.
\begin{Theorem}{\em(\cite[Theorem 4.4]{Pa2025})}\label{Th3}
	With $R$, $\mathcal I$ and $K$ as above, 	the epsilon multiplicity of  $\{(I_m:K)\}_{m\in\NN}$ exists as a limit, i.e., $$\displaystyle\lim\limits_{m\to \infty}d!\frac{\lambda\big(H^0_{\mm}(R/(I_m:K))\big)}{m^d}$$ exists and it is bounded above by $\epsilon(\mathcal I)$. 
\end{Theorem}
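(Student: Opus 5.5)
We have to prove that for a filtration $\mathcal I$ satisfying $\mathcal A(r)$ in an analytically unramified local ring and an $\mm$-primary ideal $K$, the limit $\lim d!\,\lambda(H^0_{\mm}(R/(I_m:K)))/m^d$ exists and is at most $\epsilon(\mathcal I)$. The plan is to reduce this to Cutkosky's volume framework (\cite[Theorem 6.1]{C1}) by writing the length as a difference of two colengths of $\mm$-primary families, each of which has a limit.

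\textbf{Step 1 (identify $H^0_\mm$ and truncate).} We have $H^0_{\mm}(R/(I_m:K))=(I_m:K)^{\sat}/(I_m:K)$, and $(I_m:K)^{\sat}=I_m^{\sat}$ because $K$ is $\mm$-primary. Choose $c$ with $\mm^c\subseteq K$. Then $\mm^{rm}(I_m:K)\subseteq I_m$, and
\[
\bigl((I_m:K)\cap \mm^{rm+c}\bigr)K\subseteq I_m\cap\mm^{rm}.
\]
The key point is that the $\mathcal A(r)$ condition transfers. If $x\in I_m^{\sat}\cap\mm^{r'm}$, with $r'=r+c$ chosen suitably, then $xK\subseteq I_m^{\sat}\cap \mm^{rm}=I_m\cap\mm^{rm}$, so $x\in(I_m:K)$. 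Hence
\[
I_m^{\sat}\cap\mm^{r'm}=(I_m:K)\cap\mm^{r'm}\quad\text{for all } m.
\]
Consequently,
\[
\lambda\bigl(H^0_\mm(R/(I_m:K))\bigr)=\lambda\bigl(R/((I_m:K)+\mm^{r'm})\bigr)-\lambda\bigl(R/(I_m^{\sat}+\mm^{r'm})\bigr).
\]
This is the usual identity $\lambda(A/B)=\lambda(R/(B+J))-\lambda(R/(A+J))$, valid when $A\cap J=B\cap J$ and $B\subseteq A$.

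\textbf{Step 2 (limits).} The family $\{I_m^{\sat}+\mm^{r'm}\}$ is a graded family of $\mm$-primary ideals. The family $\{(I_m:K)+\mm^{r'm}\}$ is only weakly graded: we have $K\,(I_a:K)(I_b:K)\subseteq (I_{a+b}:K)$, since $K^2(I_a:K)(I_b:K)\subseteq I_aI_b\subseteq I_{a+b}$ and one $K$ can be absorbed. It is therefore sandwiched between graded-like families up to a fixed shift. Cutkosky's Okounkov-body method applies to such families in analytically unramified rings. This can be done either by passing to the completion and reduced ring and then to the normalization valuations, or by invoking the weakly graded extension in \cite{Pa2025}'s setup via a fixed multiplier $K$. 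Either route gives
\[
\lim \lambda\bigl(R/((I_m:K)+\mm^{r'm})\bigr)/m^d .
\]
I expect the main obstacle to be this step: Cutkosky's volume theorem is stated for graded families, so I must check that a fixed-element twist does not change the limit. I would first try to embed the twisted family into the semigroup $\{(m,\nu(f))\}$ associated to graded pieces and show the cone is unchanged.

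\textbf{Step 3 (bound).} Since $I_m\subseteq (I_m:K)$ and both families have the same saturation $I_m^{\sat}$, we get
\[
\lambda\bigl(I_m^{\sat}/(I_m:K)\bigr)\le\lambda\bigl(I_m^{\sat}/I_m\bigr)=\lambda\bigl(H^0_\mm(R/I_m)\bigr).
\]
Dividing by $m^d/d!$ and taking limits, using the existence of $\epsilon(\mathcal I)$ as a limit from \cite[Theorem 6.1]{C1}, gives the stated bound.
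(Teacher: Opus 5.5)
Steps 1 and 3 are correct. The identity $\lambda(A/B)=\lambda(R/(B+J))-\lambda(R/(A+J))$ holds, and Step 3 is the same inequality $\lambda\bigl((I_m:K)^{\sat}/(I_m:K)\bigr)\le\lambda(I_m^{\sat}/I_m)$ that the paper uses. Step 2, however, carries the entire content of the statement, and it has a genuine gap.

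\emph{The sum families are not graded.} $\{I_m^{\sat}+\mm^{r'm}\}$ is not a graded family in general, because of the cross terms $I_p^{\sat}\mm^{r'q}$. Take $R=k[x,y]_{(x,y)}$, $I_m=(x^m)$ and $K=\mm$. Then $I_m^{\sat}=(I_m:K)=I_m$, $\mathcal A(1)$ holds, and your $r'$ is $2$. We have $x^p\cdot y^{2q}\in\bigl((x^p)+\mm^{2p}\bigr)\bigl((x^q)+\mm^{2q}\bigr)$, but $x^py^{2q}\notin(x^{p+q})+\mm^{2(p+q)}$ for $p,q\ge1$. Likewise, for any fixed $c\neq 0$, $c\,x^py^{2q}\notin(x^{p+q})+\mm^{2(p+q)}$ once $p,q$ exceed the order of $c$. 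So in general neither of your two $\mm$-primary families is graded, or even weakly graded. No volume theorem for (weakly) graded families of $\mm$-primary ideals applies to the colengths you wrote down.

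\emph{The weakly graded issue is not resolved.} A sandwich yields inequalities, not existence of a limit. Appealing to \cite{Pa2025} is circular, since the statement is \cite[Theorem 4.4]{Pa2025}.

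The missing idea is a twist by a nonzerodivisor. The paper quotes this statement from \cite{Pa2025}, but it uses exactly this device for this family in the proof of Theorem \ref{Main}(ii), via the filtrations $\mathcal G$ and $\mathcal T$. Pass to $\hat R$, which is reduced, and take $b\in K\cap R^o$; then $b$ is a nonzerodivisor. Since $b\in K$,
\begin{equation*}
K\cdot b(I_p:K)(I_q:K)\subseteq \bigl(K(I_p:K)\bigr)\bigl(K(I_q:K)\bigr)\subseteq I_pI_q\subseteq I_{p+q}.
\end{equation*}
Hence $\{b(I_m:K)\}_{m\geq 1}$, with $R$ in degree $0$, is an honest filtration, and so is $\{bI_m^{\sat}\}$.

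Multiplication by $b$ is injective, so $\lambda\bigl(bI_m^{\sat}/b(I_m:K)\bigr)=\lambda\bigl(I_m^{\sat}/(I_m:K)\bigr)$ exactly. There is no cone to compare. What must be checked is the truncation condition for the twisted pair, and this needs Artin--Rees on top of $\mathcal A(r)$. Suppose $bR\cap\mm^n\subseteq b\mm^{n-k}$ for $n\ge k$. Then $ba\in\mm^{(r+k)m}$ with $a\in I_m^{\sat}$ forces
\begin{equation*}
a\in I_m^{\sat}\cap\mm^{rm}=I_m\cap\mm^{rm}\subseteq(I_m:K),
\end{equation*}
so $bI_m^{\sat}\cap\mm^{(r+k)m}=b(I_m:K)\cap\mm^{(r+k)m}$.

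Now apply \cite[Theorem 4.1]{CL2024} to the pair $\{b(I_m:K)\}\subseteq\{bI_m^{\sat}\}$, as the paper does for $\mathcal G\subseteq\mathcal T$. This gives existence of the limit, and your Step 3 then gives the bound. The underlying mechanism, as in \cite[Theorem 6.1]{C1}, is to measure $\lambda(A_m/A_m\cap\mathfrak K_m)$ for valuation ideals $\mathfrak K_m\subseteq\mm^{cm}$, which is compatible with products. Colengths of $A_m+\mm^{cm}$ are not.
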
	
	The above theorems, Theorem \ref{Th2} and Theorem \ref{Th3}, motivate the main objectives of this article, which are as follows:
\\{\em {Suppose $(R,\mm)$ is an analytically unramified  local ring of dimension $d\geq 1$.
\begin{enumerate}
	\item Let $\mathcal I=\{I_m\}_{m\in\NN}$ be a filtration of ideals in $R$ that satisfies $\mathcal A(r)$ for some $r\in\ZZ_{>0}$ and $K$ be any $\mm$-primary ideal in $R$. 
	\\Does equality hold in Theorem \ref{Th3}, i.e., is
	$\epsilon(\mathcal I)=\displaystyle\lim\limits_{m\to \infty}d!\frac{\lambda\big(H^0_{\mm}(R/(I_m:K))\big)}{m^d}?$\vspace{2mm}
	\item Can we obtain a ``multiplicity=volume" formula for the epsilon multiplicity of an ideal in $R$, i.e., can the epsilon multiplicity of an ideal be expressed as a limit of the epsilon multiplicities of other suitable ideals?
\end{enumerate}	}}
In this article, we provide affirmative answers to both of the above questions and prove the following.
\begin{Theorem}{\label{TheoremA}}
	Let $(R,\mm)$ be an analytically unramified  local ring of dimension $d\geq 1$ and $K$ be any $\mm$-primary ideal in $R$. 
	\begin{enumerate}
		\item[$(1)$]  Let $\mathcal I=\{I_m\}_{m\in\NN}$ be a filtration of ideals and $\mathcal I$ satisfy $\mathcal A(r)$ condition for some $r\in\ZZ_{>0}$. Then 
		\begin{enumerate} 
			\item[$(i)$] $\epsilon(\mathcal I)=\displaystyle\lim\limits_{n\to \infty}d!\frac{\lambda\big(H^0_{\mm}(R/(I_n:K))\big)}{n^d}.$\vspace{2mm}
			\item [$(ii)$] $\epsilon(\mathcal F(n))$ is a limit  where 
			$\mathcal F(n)=\{F(n)_m=(I_{nm}:K^m) \}_{m\in\NN}$ is a graded family of ideals for all $n\in\ZZ_{>0}$ and \vspace{1mm} $$\epsilon(\mathcal I)=\displaystyle\lim\limits_{n\to\infty}\frac{\epsilon(\mathcal F(n))}{n^d}.$$
		\end{enumerate} 
		\item[$(2)$] Let $\mathcal I=\{I^m\}_{m\in\NN}$ for some ideal $I$ in $R$.  For each $n\in\ZZ_{>0}$, let $J_n$ be an ideal in $R$ such that $I^n\subseteq J_n\subseteq (I^n:K)$.
		Then
		$$\epsilon (I)=\displaystyle\lim\limits_{n\to \infty}\frac{\epsilon (J_n)}{n^d}.$$
	\end{enumerate}
\end{Theorem}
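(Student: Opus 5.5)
The plan rests on one elementary observation about saturations, combined with a volume (Okounkov-body) estimate coming from the proof of \cite[Theorem 6.1]{C1}. The observation: if $J\subseteq J'\subseteq (J:K^t)$ for some $t$ and $K$ is $\mm$-primary, then $J'^{\sat}=J^{\sat}$. Indeed $\mm^{c}\subseteq K$ for some $c$, so $J\subseteq J'\subseteq J^{\sat}$, and saturating gives the claim. Consequently
$$\lambda(H^0_\mm(R/J'))=\lambda(J^{\sat}/J')\le \lambda(J^{\sat}/J)=\lambda(H^0_\mm(R/J)),$$
and all the families in the statement are squeezed between a family and its saturation.

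\textbf{Part (1)(ii).} Apply the observation to
$$I_{nm}\subseteq F(n)_m=(I_{nm}:K^m)\subseteq I_{nm}^{\sat},\qquad K^mF(n)_m\subseteq I_{nm}.$$
First I check that $\mathcal F(n)$ is a graded family and that it inherits an $\mathcal A(r')$-type condition, with $r'$ depending on $n$, from $\mathcal A(r)$ for $\mathcal I$. This makes $\epsilon(\mathcal F(n))$ a limit by \cite[Theorem 6.1]{C1}.

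Next I compare $\lambda(I_{nm}^{\sat}/F(n)_m)$ with $\lambda(I_{nm}^{\sat}/I_{nm})$. By $\mathcal A(r)$, everything can be computed modulo $\mm^{rnm}$. Then I pass to Cutkosky's construction: reduce to the domain case via the components of the completion, and choose a valuation $\nu$ with Okounkov bodies $\Delta(\mathcal I^{\sat})\supseteq\Delta(\mathcal I)$ truncated by $\mm^{r\cdot}$. In that setting, $\lambda(H^0_\mm(R/I_{nm}))/(nm)^d$ tends to a difference of volumes.

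The containment $K^m F(n)_m\subseteq I_{nm}$ says that the semigroup of $F(n)_m$ translated by at most $m\,\nu(K)$, a vector bounded by $mc$, lands in that of $I_{nm}$. At level $nm$, such a translation changes the truncated volume by at most $C\cdot mc\cdot(nm)^{d-1}$. Dividing by $m^d$ and letting $m\to\infty$ gives
$$\epsilon(\mathcal I)\,n^d-Cc\,n^{d-1}\;\le\;\epsilon(\mathcal F(n))\;\le\;\epsilon(\mathcal I)\,n^d,$$
hence $\epsilon(\mathcal F(n))/n^d\to\epsilon(\mathcal I)$.

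\textbf{Part (1)(i).} By the observation, $\lambda(H^0_\mm(R/(I_n:K)))=\lambda(I_n^{\sat}/I_n)-\lambda((I_n:K)/I_n)$. So it suffices to show $\lambda((I_n:K)/I_n)=o(n^d)$, and the upper bound is already Theorem \ref{Th3}. For this I use the same translation estimate with $m=1$. The set $(I_n:K)\cap\mm^{rn}$ lies in $I_n^{\sat}\cap\mm^{rn}=I_n\cap\mm^{rn}$, so only the truncated region matters. There, elements of $(I_n:K)$ have valuation vectors within a bounded shift $c$ of the semigroup of $I_n$. The count of such lattice points is $O(n^{d-1})$, by the same boundary-layer estimate as in the proof of (1)(ii).

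\textbf{Part (2).} The filtration $\{I^m\}$ satisfies $\mathcal A(r)$ for some $r$ by the uniform Artin--Rees theorem (Swanson). Moreover
$$I^{nm}\subseteq J_n^m\subseteq (I^n:K)^m\subseteq (I^{nm}:K^m)=F(n)_m .$$
By the observation, the first and last members share the saturation $(I^{nm})^{\sat}$, so
$$\lambda(H^0_\mm(R/F(n)_m))\le\lambda(H^0_\mm(R/J_n^m))\le\lambda(H^0_\mm(R/I^{nm})).$$
Each $\epsilon(J_n)$ is a limit by Theorem \ref{TheoremI1}. Dividing by $m^d/d!$ and letting $m\to\infty$ gives $\epsilon(\mathcal F(n))\le\epsilon(J_n)\le n^d\epsilon(I)$. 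Dividing by $n^d$ and applying (1)(ii) finishes the argument.

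\textbf{Main obstacle.} The hard step is the volume-shift estimate: controlling uniformly the lattice points in the boundary layer of width $\sim mc$ in the truncated cone. This requires the full Okounkov-body machinery of \cite{C1}, including the reduction from analytically unramified rings to domains and the uniform bound from $\mathcal A(r)$ needed to keep everything inside a bounded region.
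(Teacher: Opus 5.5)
Your argument is correct in substance, but it takes a genuinely different route from the paper. The paper never bounds $\lambda\big((I_{nm}:K^m)/I_{nm}\big)$ directly. It twists by powers of a nonzerodivisor $b\in K$ to build graded families $\mathcal L(n)$, $\mathcal T(n)$, $\mathcal G(n)$ with $L(n)_1=L_n$, $L(n)_m\subseteq L_{nm}$, and the uniform truncation $L(n)_m\cap\mm^{cmn}=T(n)_m\cap\mm^{cmn}$, obtained from Artin--Rees applied to $bR$. It then quotes the multiplicity${}={}$volume comparison theorem \cite[Theorem 4.1]{CL2024}. For (1)(i) it also needs \cite[Theorem 4.4]{Pa2025} to know that the limit exists.

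You instead prove a uniform shift estimate $\lambda\big((I_{nm}:K^m)/I_{nm}\big)\le C(m+1)(nm)^{d-1}$, with $C$ independent of $n$ and $m$. This gives (1)(i), including existence of the limit, directly from the existence of $\epsilon(\mathcal I)$. It also gives (1)(ii) with an explicit rate, $\epsilon(\mathcal F(n))/n^d=\epsilon(\mathcal I)+O(1/n)$; compare $2(n-1)^2/n^2$ in Example \ref{ex1}. Your part (2) is the same squeeze as the paper's. The price is that you must work inside the valuation machinery of \cite{C1} rather than quote a theorem.

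Three points there deserve care.

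\emph{No Okounkov bodies are needed.} ``Translated volume'' is the wrong language; the estimate is a finite lattice-point count at each level. On an analytically irreducible component, use Cutkosky's rank-one valuation $\nu$ of rational rank $d$, with values $\langle w,a\rangle$ for $a\in\ZZ_{\ge 0}^d$. Set $V_\alpha=\{f:\nu(f)\ge\alpha\}$, so that $V_{cN}\subseteq\overline{\mm}^{\,N}$. Then $\lambda(\mathfrak b/\mathfrak b\cap V_\alpha)=\sum_{\beta<\alpha}\dim_k (\mathfrak b\cap V_\beta)/(\mathfrak b\cap V_\beta^+)$, and each term is at most $[\kappa(\nu):k]$, where $k=R/\mm$. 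Multiplication by $z^m$, for $z\in K$ a nonzerodivisor, is injective on these graded pieces. Hence the excess is bounded by the number of $a$ in the slab $\alpha-m\nu(z)\le\langle w,a\rangle<\alpha$, which is $O\big((m+1)\alpha^{d-1}\big)$.

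\emph{The reduction to domains is not automatic.} Let $L_j=P_1\cap\cdots\cap P_j$, where the $P_i$ are the minimal primes of $\hat R$. The hypothesis $B\cap\mm^N\subseteq A$ does not descend directly to the images of $B\cap L_{j-1}$ and $A\cap L_{j-1}$ in $\hat R/P_j$. First replace $A\subseteq B$ by $A+\mm^N\subseteq B+\mm^N$. The quotient has the same length, and $K^mB\subseteq A$ still holds. After this replacement every piece contains $\overline{\mm}^{\,N}$ times the image of $L_{j-1}$, and Artin--Rees supplies the needed truncation. Components of dimension $<d$ contribute only $O(N^{d-1})$.

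\emph{A hypothesis of \cite[Theorem 6.1]{C1} is unchecked.} To apply it to $\mathcal F(n)$ you must verify its minimal-prime hypothesis, as the paper does: if $F(n)_1\subseteq P$, then $F(n)_m\subseteq P$ for all $m$.
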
	
\noindent
We provide an example of a Noetherian filtration $\mathcal I$ such that all the graded families $\mathcal F(n)$ are non-Notherian for all $n\in\ZZ_{>0}$ (see Example \ref{ex1}).
\\The analytic spread of an ideal $I$ in a (Noetherian) local ring $R$ is defined to be
$\ell(I)=\dim R[I]/\mm R[I]$
where $R[I]=\bigoplus_{m\in\NN}I^m$ is the Rees algebra of $I$ and  $\height(I)\le\ell(I)\le \dim R$ \cite[page 115]{Li}, \cite[Corollary 8.3.9]{HS}. The notion of  analytic spread  of an ideal  have been extended for filtrations of ideals in  \cite{CS2022}. 
Let $\mathcal I=\{I_m\}_{m\in\NN}$ be a graded family of ideals in $R$. The Rees ring of $\mathcal I$ is defined as $R[\mathcal I]=\oplus_{m\in\NN}I_m$ and  the analytic spread of $\mathcal I$ is defined as $\ell(\mathcal I):=\dim R[\mathcal I]/\mm R[\mathcal I]$ and it is bounded above by $\dim R$ \cite[Definition 3.2, Lemma 3.4 and Lemma 3.5]{CS2022} (in this reference the results are proved for filtrations of ideals. But the exact same proofs will work for graded families of ideals).
\\Katz and Validashti studied the positivity of the epsilon multiplicity of an ideal in terms of the maximality of the analytic spread of that ideal  \cite[Theorem 4.7]{KV}.
In \cite[Theorem 4.1]{CS2024}, the authors extended the result in \cite[Theorem 4.7]{KV} for $\QQ$-divisorial filtration of ideals  when $R$ is  a $d$-dimensional excellent normal local domain of equicharacteristic zero, or an arbitrary excellent local domain of dimension $\le 3$ and proved that the epsilon multiplicity of a $\QQ$-divisorial filtration is positive if and only if the analytic spread of the filtration is equal to $\dim R$.
\\Let $I$ be an ideal of maximal analytic spread. This property, however, need not be preserved under taking a colon by an $\mm$-primary ideal, i.e., the analytic spread of $(I:K)$ need not be maximal for any $\mm$-primary ideal $K$. 
\\For example, let $k[x,y]$ be a polynomial ring over a field $k$ and $R=k[x,y]_{({x,y})}$. Consider the ideal $I=(x^2,xy)R$. Then the analytic spread of $I$ is $2$ while the analytic spread of $(I:\mm)$ is $1$ where $\mm=(x,y)R$. 
\\This raises the following natural question:
\begin{Question} Suppose $I$ is an ideal of maximal analytic spread  in $R$. Does it follow that the ideals $(I^n:K)$ have  maiximal analytic spread for all $n\gg 0$ and for any $\mm$-primary ideal $K$?
\end{Question}
As a consequence of Theorem \ref{TheoremA}, we obtain an affirmative answer to this question and show the following. 
\begin{Corollary}
	Let $(R,\mm)$ be an analytically unramified  local ring of dimension $d\geq 1$, $I$ be an ideal in $R$ and $K$ be any $\mm$-primary ideal in $R$. For each $n\in\ZZ_{>0}$, let $J_n$ be an ideal in $R$ such that $I^n\subseteq J_n\subseteq (I^n:K)$. Then the following are equivalent.
	\begin{enumerate}
		\item[$(i)$] $\ell(I)=d$.
		\item[$(ii)$]  $\ell(J_n)=d$ for all $n\gg0$. 
		\item[$(iii)$]  $\ell(J_n)=d$ for some $n\geq 1$.
	\end{enumerate} 
\end{Corollary}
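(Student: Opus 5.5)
We will reduce everything to positivity of epsilon multiplicity, using the Katz--Validashti criterion \cite[Theorem 4.7]{KV}. In an analytically unramified local ring (in fact whenever the epsilon multiplicity is a limit, as in Theorem \ref{TheoremI1}), an ideal $J$ satisfies $\ell(J)=d$ if and only if $\epsilon(J)>0$. We take this criterion as the bridge between analytic spread and the limit formula of Theorem \ref{TheoremA}(2). We also need the elementary fact that $\epsilon(I^n)=n^d\epsilon(I)$, which follows directly from the definition because $\lambda(H^0_\mm(R/I^{nm}))/m^d$ is a subsequence rescaled by $n^d$.

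\textbf{Step 1: $(i)\Rightarrow(ii)$.} Assume $\ell(I)=d$. Then $\epsilon(I)>0$ by \cite[Theorem 4.7]{KV}. By Theorem \ref{TheoremA}(2),
\[
\lim_{n\to\infty}\frac{\epsilon(J_n)}{n^d}=\epsilon(I)>0 .
\]
Hence $\epsilon(J_n)>0$ for all $n\gg0$, and applying \cite[Theorem 4.7]{KV} to each $J_n$ gives $\ell(J_n)=d$ for all $n\gg 0$.

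\textbf{Step 2: $(ii)\Rightarrow(iii)$.} This is trivial.

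\textbf{Step 3: $(iii)\Rightarrow(i)$.} This is the direction needing care, since Theorem \ref{TheoremA}(2) only gives asymptotic information. Suppose $\ell(J_n)=d$ for a single $n$, so $\epsilon(J_n)>0$. We compare $\epsilon(J_n)$ with $\epsilon(I^n)$ directly.

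First, since $K$ is $\mm$-primary, $I^n\subseteq J_n\subseteq (I^n:K)\subseteq (I^n)^{\sat}$. So $J_n$ and $I^n$ have the same saturation, and they agree after localizing at every non-maximal prime.

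The cleanest route is then to use analytic spread directly. Since $J_n\subseteq (I^n)^{\sat}$, we have $J_n^m\subseteq (I^n)^{\sat\, m}$. One could hope for the bound $\epsilon(J_n)\le\epsilon(I^n)$, but this is unclear, because $J_n^m$ may not be contained in $(I^{nm})^{\sat}$ in the needed way.

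Instead we argue by contradiction. Suppose $\ell(I)<d$. Then $\epsilon(I)=0$, and Theorem \ref{TheoremA}(2) gives $\epsilon(J_n)/n^d\to 0$. This alone does not contradict $\epsilon(J_n)>0$ for one $n$.

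We therefore replace $J_n$ by its powers. Since $K^m J_n^m\subseteq I^{nm}$, we get $J_n^m\subseteq (I^{nm}:K^m)$. Thus $I^{nm}\subseteq J_n^m I^{0}$, and the family $J'_{nm}:=J_n^m+I^{nm}$ lies between $I^{nm}$ and $(I^{nm}:K^m)$. This is the family $\mathcal F(n)$-type situation of Theorem \ref{TheoremA}(1)(ii).

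The key claim is $\epsilon(J_n^m)\le \epsilon(\mathcal F(n)_m)$-type bounds, which would give
\[
\epsilon(J_n)\le \epsilon(\mathcal F(n))\le n^d\epsilon(I)\cdot C,
\]
so that $\epsilon(I)=0$ forces $\epsilon(J_n)=0$. Alternatively, and more robustly, we can use that $\ell(\cdot)$ is the dimension of the fiber cone. Since $J_n\subseteq\overline{?}$ is not available, we prefer the epsilon argument.

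\textbf{Main obstacle.} The hardest step is bounding $\epsilon(J_n)$ by a multiple of $\epsilon(I)$ for a fixed $n$. My plan is to deduce from the proof of Theorem \ref{TheoremA}(1)(ii) that
\[
\epsilon(\mathcal F(n))\le n^d\epsilon(I)\quad\text{for each } n,
\]
with $\epsilon(\mathcal F(n))$ vanishing when $\epsilon(I)$ does, and then to compare $H^0_\mm(R/J_n^m)$ with $H^0_\mm(R/(I^{nm}:K^m))$ via the containment $J_n^m\subseteq (I^{nm}:K^m)$ and the equal saturations.
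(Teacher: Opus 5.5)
Your Steps 1 and 2 match the paper's argument (Katz--Validashti plus Theorem \ref{Main}(2)). Step 3, $(iii)\Rightarrow(i)$, is not proved, and the route you sketch for it goes the wrong way. The containment $J_n^m\subseteq (I^{nm}:K^m)=F(n)_m$, together with the equal saturations $(J_n^m)^{\sat}=F(n)_m^{\sat}=(I^{nm})^{\sat}$, gives $\lambda_R(F(n)_m^{\sat}/F(n)_m)\le\lambda_R((J_n^m)^{\sat}/J_n^m)$. That is, it gives $\epsilon(\mathcal F(n))\le\epsilon(J_n)$; this is the inequality the paper uses for the $\liminf$ bound in Theorem \ref{Main}(2). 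Your ``key claim'' $\epsilon(J_n)\le\epsilon(\mathcal F(n))$ is the reverse of what that containment yields. So even granting $\epsilon(\mathcal F(n))\le n^d\epsilon(I)$, you get no upper bound on $\epsilon(J_n)$, and the argument does not close.

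The missing step is the one you set aside as ``unclear'': the direct bound $\epsilon(J_n)\le n^d\epsilon(I)$ holds and is elementary. For every $m$,
\[
I^{nm}=(I^n)^m\subseteq J_n^m\subseteq (I^n:K)^m\subseteq (I^{nm}:K^m)\subseteq (I^{nm})^{\sat},
\]
where the last inclusion holds because $K$ is $\mm$-primary. Hence $(J_n^m)^{\sat}=(I^{nm})^{\sat}$, and since $I^{nm}\subseteq J_n^m$,
\[
\lambda_R\bigl(H^0_{\mm}(R/J_n^m)\bigr)=\lambda_R\bigl((I^{nm})^{\sat}/J_n^m\bigr)\le\lambda_R\bigl((I^{nm})^{\sat}/I^{nm}\bigr)=\lambda_R\bigl(H^0_{\mm}(R/I^{nm})\bigr).
\]
Dividing by $m^d$ and passing to the limit gives $\epsilon(J_n)\le n^d\epsilon(I)$; this is equation (\ref{eqeq}) in the paper.

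So if $\ell(J_n)=d$ for one $n$, then $\epsilon(J_n)>0$, hence $\epsilon(I)>0$, hence $\ell(I)=d$ by \cite[Theorem 4.7]{KV}. No contradiction argument and no detour through $\mathcal F(n)$ is needed. You already had the inclusion $I^{nm}\subseteq J_n^m$; the point is to compare $J_n^m$ with the smaller ideal $I^{nm}$, which has the same saturation, rather than with the larger ideal $(I^{nm}:K^m)$.
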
		
We conclude the article with the following result on the maximality of the analytic spread of divisorial filtrations of ideals.
\begin{Theorem}
	Let $(R,\mm)$ be an analytically unramified  local ring of dimension $d\geq 1$ and $K$ be any $\mm$-primary ideal in $R$. Let $\mathcal I=\{I_m\}_{m\in\NN}$ be a filtration of ideals in $R$. Consider the graded families of ideals $\mathcal F(n)=\{F(n)_m=(I_{nm}:K^m)\}_{m\in\NN}$  for all integers $n\geq 1$.
	\begin{enumerate}
		\item[$(1)$] Supppose $\mathcal I$ satisfies $\mathcal A(r)$ condition for some $r\in\ZZ_{>0}$. Then the following are equivalent.	\begin{enumerate}
			\item[$(i)$] $\epsilon(\mathcal I)>0$,
			\item[$(ii)$] $\epsilon(\mathcal F(n))>0$ for all $n\gg0$,
			\item[$(iii)$] $\epsilon(\mathcal F(n))>0$ for some $n\geq 1$.
		\end{enumerate}
		\item[$(2)$] If $R$ is an
		excellent normal local domain of dimension $d\geq 1$ of equicharacteristic zero, or an arbitrary excellent local domain with $d\leq 3$ and $\mathcal I$ is a $\QQ$-divisorial filtration of ideals in $R$ then  the following are equivalent.
		\begin{enumerate}
			\item[$(i)$] $\ell(\mathcal I)=d$,
			\item[$(ii)$] $\ell(\mathcal F(n))=d$ for all $n\gg0$,
			\item[$(iii)$] $\ell(\mathcal F(n))=d$ for some $n\geq 1$.
		\end{enumerate}	
			\end{enumerate}	
\end{Theorem}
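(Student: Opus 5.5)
Part (1) is an immediate consequence of Theorem \ref{TheoremA}(1)(ii). We have $\epsilon(\mathcal I)=\lim_{n\to\infty}\epsilon(\mathcal F(n))/n^d$, and every $\epsilon(\mathcal F(n))\geq 0$.

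(i)$\Rightarrow$(ii): if $\epsilon(\mathcal I)>0$, then $\epsilon(\mathcal F(n))/n^d>\epsilon(\mathcal I)/2>0$ for all $n\gg0$.

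(ii)$\Rightarrow$(iii) is trivial.

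(iii)$\Rightarrow$(i): we compare $\mathcal F(n)$ with the Veronese-type family $\{I_{nm}\}_m$. Since $I_{nm}\subseteq (I_{nm}:K^m)=F(n)_m$, we get $\lambda(H^0_{\mm}(R/F(n)_m))\le\lambda(H^0_{\mm}(R/I_{nm}))$. This uses the surjection $R/I_{nm}\to R/F(n)_m$ and the fact that $F(n)_m^{\sat}\supseteq I_{nm}^{\sat}$, so that
$$\lambda(I_{nm}^{\sat}/I_{nm})\ge\lambda(I_{nm}^{\sat}/(I_{nm}^{\sat}\cap F(n)_m))$$
and, with a little care, this bounds $\lambda(F(n)_m^{\sat}/F(n)_m)$. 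The step needing care is exactly this length comparison, since $F(n)_m^{\sat}$ can be larger than $I_{nm}^{\sat}$. Rather than argue it directly, I would use the bound from Theorem \ref{Th3} in the same way the proof of Theorem \ref{TheoremA}(1) presumably does. That proof gives $\epsilon(\mathcal F(n))\le n^d\epsilon(\mathcal I)$. Dividing by $m^d$ and taking limits (using that $\epsilon(\mathcal I)$ is a limit by the $\mathcal A(r)$ property) then yields $\epsilon(\mathcal I)\ge\epsilon(\mathcal F(n))/n^d>0$.

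Part (2) reduces to part (1) via \cite[Theorem 4.1]{CS2024}: under the stated hypotheses, $\epsilon(\mathcal I)>0$ iff $\ell(\mathcal I)=d$. A $\QQ$-divisorial filtration is discrete valued, hence satisfies $\mathcal A(r)$ by \cite[Theorem 3.1]{CS2024}, so part (1) applies. What remains is to relate $\ell(\mathcal F(n))=d$ to $\epsilon(\mathcal F(n))>0$. The main obstacle is that $\mathcal F(n)$ need not be divisorial, so \cite{CS2024} does not apply to it directly.

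Write $\mathcal I=\{I(mD)\}$ with $D=\sum a_i v_i$. I would try to show that $F(n)_m=(I(nmD):K^m)$ is itself a $\QQ$-divisorial filtration, namely $I(m(nD-E))$ for a suitable divisor $E$ built from the valuations. This holds, for instance, if $K$ is replaced by its integral closure and the valuations are Rees valuations of $K$, with $v_i(K)$ subtracted. Since $(I(nmD):K^m)=\bigcap_i\{f: v_i(f)+v_i(K^m)\geq nma_i\}$ when $K$-colons commute with the valuation conditions, this requires enlarging the set of divisors by the Rees valuations of $K$. The identity is only approximately true, so the fallback is to sandwich $\mathcal F(n)$ between two divisorial filtrations with the same positivity behavior for $\epsilon$ and $\ell$, using the bound $\ell\le d$ and that $\ell$ is monotone under finite Rees-algebra extensions. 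Once $\ell(\mathcal F(n))=d\iff\epsilon(\mathcal F(n))>0$ holds for each $n$, part (2) follows from part (1).
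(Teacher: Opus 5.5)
\textbf{Part (1).} This follows the paper's argument, but your justification of (iii)$\Rightarrow$(i) rests on a false worry. $F(n)_m^{\sat}$ is never larger than $I_{nm}^{\sat}$: since $K^m$ is $\mm$-primary, $(I_{nm}:K^m)^{\sat}=I_{nm}^{\sat}$ (Remark \ref{nzd}(1)). Combined with $I_{nm}\subseteq F(n)_m$, this gives $\lambda_R(F(n)_m^{\sat}/F(n)_m)\le\lambda_R(I_{nm}^{\sat}/I_{nm})$ directly, hence $\epsilon(\mathcal F(n))\le n^d\epsilon(\mathcal I)$. There is no need to invoke Theorem \ref{Th3}, which concerns $\{(I_m:K)\}$ rather than $\{(I_{nm}:K^m)\}_m$.

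\textbf{Part (2): the genuine gap.} Your reduction requires $\mathcal F(n)$ to be $\QQ$-divisorial, so that \cite[Theorem 4.1]{CS2024} applies to it. You do not establish this. You write down the right formula $(I_{nm}:K^m)=\bigcap_i\{f:\nu_i(f)+\nu_i(K^m)\ge \lceil nma_i\rceil\}$, but then call it ``only approximately true'' and suggest it needs the Rees valuations of $K$. You then fall back on a sandwich argument you never carry out. That fallback is doubtful anyway, since neither positivity of $\epsilon$ nor maximality of $\ell$ survives enlarging or shrinking a filtration; compare $(x^2,xy)\subseteq((x^2,xy):\mm)=(x)$ in the introduction.

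In fact the formula is exact for every valuation $\nu$ with $R\subseteq\mathcal O_\nu$ and every ideal $L$. Since $\min_{y\in L}\nu(xy)=\nu(x)+\nu(L)$, we get $(I(\nu)_c:L)=I(\nu)_{c-\nu(L)}$, with $I(\nu)_j=R$ for $j\le 0$. Moreover:
\begin{itemize}
\item $\nu(K^m)=m\nu(K)$;
\item colons commute with finite intersections;
\item $\nu_i(K)=0$ when $\mm_{\nu_i}\cap R\neq\mm$, because $K$ is $\mm$-primary.
\end{itemize}
With $b_i=\nu_i(K)\in\ZZ$ this yields
$$F(n)_m=\bigcap_{i=1}^r I(\nu_i)_{\lceil (na_i-b_i)m\rceil},$$
so $\mathcal F(n)$ is a $\QQ$-divisorial filtration on exactly those $\nu_i$ with $na_i>b_i$. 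This is the paper's expression (\ref{10eq}), and it is the key step you are missing.

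\textbf{The degenerate case.} You also need to handle the case where no $\nu_i$ survives, i.e.\ all $\nu_i$ are centered at $\mm$ and $na_i\le b_i$ for all $i$. Then $\mathcal F(n)$ is the trivial filtration, with $\epsilon(\mathcal F(n))=0$ but $\ell(\mathcal F(n))=\dim (R/\mm)[t]=1$. So for $d=1$ your per-$n$ equivalence $\ell(\mathcal F(n))=d\iff\epsilon(\mathcal F(n))>0$ is false. For example, take $R$ a DVR, $\mathcal I=\{\mm^m\}$, $K=\mm^2$, $n=1$.

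The paper avoids this by treating separately the case where every $\nu_i$ is centered at $\mm$. There all $I_m$ are $\mm$-primary and $\ell(\mathcal I)=d$ by \cite[Theorem 1.5]{C2025}. In the remaining case, the non-$\mm$-centered valuations stay in every $\mathcal F(n)$. In (i)$\Rightarrow$(ii), positivity of $\epsilon(\mathcal F(n))$ already rules out the trivial filtration.
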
	
\section{Notation and definitions }
We denote the set of nonnegative integers by $\NN$,  the set of positive real numbers, the set of  positive integers and the set of positive rational numbers by $\RR_{>0}$, $\ZZ_{>0}$ and $\QQ_{>0}$ respectively. Throughout this article, $(R,\mm)$ is a Noetherian local ring of dimension $d$. Let $\mathcal I=\{I_m\}_{m\in\NN}$ be a family of ideals in $R$.
\begin{Definition}\label{fil} {\em A graded family $\mathcal I=\{I_m\}_{m\in\NN}$ of ideals in a ring $R$ is a collection of ideals in $R$ such that $I_0=R$ and $I_mI_n\subset I_{m+n}$ for all $m,n\in \NN$. 
	\\A graded family of ideals $\mathcal I=\{I_m\}_{m\in\NN}$ in $R$ is called a filtration if $I_n\subset I_m$ for all $m,n\in \NN$ with $m\leq n$.}
\end{Definition}
A graded family of ideals $\mathcal I=\{I_m\}_{m\in\NN}$ in $R$ is called Noetherian if the Rees algebra $R[\mathcal I]=\bigoplus_{m\in\NN}I_m$ is a finitely generated $R$-algebra. Otherwise we say $\mathcal I$ is non-Noetherian.\vspace{2mm}
\\We denote the $\mm$-adic completion of $R$ by $\hat{R}$ and the set $R\setminus \bigcup_{P\in\Min R}P$ by $R^o$. 
\begin{Remark}\label{nzd1}
	{\em Since ${\hat{R}}$  is a flat $R$-algebra, by \cite[Theorem 9.5]{Mat}, the going-down theorem holds and hence contraction of any minimal prime of $\hat{R}$ is a minimal prime of $R$. Thus for any $c\in R^o$, we have $c\in {\hat{R}}^o$.}
 \end{Remark}	
	\begin{Definition}\label{wg}{\rm\cite{DM} A family of ideals $\mathcal I=\{I_m\}_{m\in\NN}$  in $R$ is called a weakly graded family of ideals  if $I_0=R$ and there exists an element $c\in R^o$ such that $cI_mI_n\subset I_{m+n}$ for all $m,n\geq 1$. }
\end{Definition}
Note that a graded family of ideals is a weakly graded family of ideals (take $c=1$). Suppose $\{J_m\}$ is a weakly graded family  of ideals in $R$ and $c\in R^0$ such that $cJ_nJ_m\subset J_{n+m}$ for all $n,m\in\NN$. Let $K$ be any ideal in $R$ such that $K\cap R^o\neq\emptyset$. Let $d\in K\cap R^o$. Then $cd I_nI_m\subset I_{n+m}$ for all $m,n\in\NN$ where $\mathcal I=\{I_m=(J_{m}:K)\}$.
In this article, we mainly focus on weakly graded families of ideals of the form $\{(I_m:K)\}$ where $\mathcal I=\{I_m\}$ is a filtration of ideals and satisfies $\mathcal A(r)$ condition for some $r\in\ZZ_{>0}$.
For more details about weakly graded families of ideals, see \cite{DM}, \cite{Pa2025}.
\\Let $(R,\mm)$ be a Noetherian local domain of dimension $d$ with quotient field $K$. Let $\nu$ be a discrete valuation of $K$ with valuation ring $\mathcal O_{\nu}$ and maximal ideal $m_{\nu}$. Suppose that $R\subset \mathcal O_{\nu}$. Then for all $m\in \NN$, the valuation ideals are defined as
$$
I(\nu)_m=\{f\in R\mid \nu(f)\ge m\}=m_{\nu}^m\cap R.
$$
Note that $I(\nu)_m$ are $m_{\nu}\cap R$-primary ideals. For any $x\in\RR$, the smallest integer that is greater than or equal to $x$  is denoted by $\lceil x\rceil$.
\begin{Definition}\label{dis} {\em A discrete valued filtration of $R$ is a filtration $\mathcal I=\{I_m\}$ such that there exist discrete valuations $\nu_1,\ldots,\nu_r$ of $K$ and $a_1,\ldots, a_r\in \RR_{>0}$ such that $R\subset \mathcal O_{{\nu}_i}$ for all $1\leq i\leq r$ and for all $m\in \NN$,}
	$$I_m=I(\nu_1)_{\lceil ma_1\rceil}\cap\cdots\cap I(\nu_r)_{\lceil ma_r\rceil}.$$ \end{Definition} 
A divisorial valuation of $R$ (\cite[Definition 9.3.1]{HS}) is a valuation $\nu$ of $K$ such that if $\mathcal O_{\nu}$ is the valuation ring of $\nu$ with maximal ideal $\mathfrak m_{\nu}$, then $R\subset O_{\nu}$ and if $\mfp=\mathfrak m_{\nu}\cap R$ then $\mbox{trdeg}_{\kappa(\mfp)}\kappa(\nu)={\rm ht}(\mfp)-1$, where $\kappa(\mfp)$ is the residue field of $R_{\mfp}$ and $\kappa(\nu)$ is the residue field of $O_{\nu}$. Every divisorial valuation $\nu$ is a  discrete valuation  \cite[Theorem 9.3.2]{HS}. 
\begin{Definition} {\em A divisorial filtration of $R$ is a discrete valued filtration 
	$$
	\{I_m=I(\nu_1)_{\lceil ma_1\rceil}\cap\cdots\cap I(\nu_r)_{\lceil ma_r\rceil}\}
	$$
	where all the discrete valuations $\nu_1,\ldots,\nu_r$ are divisorial valuations.  A divisorial filtration is called $\QQ$-divisorial if $a_i\in \QQ_{>0}$ for all $1\leq i\leq r$. (For  more details about $\QQ$-divisorial filtration, see \cite{C2021}, \cite{CS}.)}
\end{Definition} 
If $I\subset R$ is an ideal in $R$, we define
$I^{\rm sat}=I:m_R^{\infty}=\cup_{n=1}^{\infty}I:m_R^n$.
\section{Main results} In this section, we prove the main results of this article. We begin by noting some properties of a weakly graded family of ideals of the form $\{(I_m:K)\}_{m\in\NN}$ where $\{I_m\}_{m\in\NN}$ is a graded family of ideals and $K$ is an ideal in a local ring $(R,\mm)$.
	\begin{Remark} \label{nzd}{\em 
		  Let $(R,\mm)$ be a Noetherian local ring. 	\begin{enumerate} 
				\item For any ideal $I$ in $R$ and for any $\mm$-primary ideal $K$ in $R$, $(I:K)^{\sat}=I^{\sat}.$
			\item Let $\mathcal I=\{I_m\}_{m\in\NN}$ be a graded family of ideals. Consider a weakly graded family of ideals $\{(I_m:K)\}_{m\in\NN}$ in $R$ where $K$ is an ideal in $R$ with  $K\cap R^o\neq\emptyset$. Let  $b\in K\cap R^o$. Then 
		\begin{enumerate}
	\item $\{b(I_m:K)\}_{m\in\NN}$ is a graded family of ideals in $R$.
	\item $\{b(I_m:K)^{\sat}\}_{m\in\NN}$ is a graded family of ideals  in $R$.  Let $bx\in b(I_m:K)^{\sat}$ and $by\in b(I_n:K)^{\sat}$ for some $x\in (I_m:K)^{\sat}$ and $y\in (I_n:K)^{\sat}$. Then $xK\mm^r\in I_m$ and $yK\mm^s\in I_n$ for some $r,s\in\NN_{>0}$. Since $b\in K$, we have $bxyK\mm^{r+s}=bx\mm^ryK\mm^s\subseteq I_mI_n\subseteq I_{mn}$ and hence $bxy\in (I_{m+n}:K)^{\sat}$ and $b^2xy\in b(I_{m+n}:K)^{\sat}$ . 
	\item If $\mathcal I$ is a filtration of ideals in $R$ then $\{b(I_m:K)\}_{m\in\NN}$ and $\{b(I_m:K)^{\sat}\}_{m\in\NN}$ are filtrations of ideals in $R$.
			\end{enumerate}	
		\end{enumerate}
	}
\end{Remark}
\begin{Remark}
	{\em Let $\{J_m\}_{m\in\NN}$ be a weakly graded family  of ideals in $R$. Consider the set $S=\{c \in R^o : cJ_mJ_n\subset J_{m+n} \mbox{ for all } m,n\geq 1 \}$. Then the graded family $\{cJ_m\}_{m\in\NN}$ is not  necessarily a Noetherian graded family for any $c\in S$.
	\\Let $K[x,y]$ be a polynomial ring  over a field $K$ and $R=K[x,y]_{(x,y)}$. Consider the weakly graded family of ideals  $\{J_m=(( x^2,xy) ^{m+1}:(x))=( x^{2m+1-i}y^i:0\leq i\leq m+1)R \}_{m\in\NN}$ and $g\in S$ be any element. Then the graded family $\mathcal P=\{P_m=gJ_m\}_{m\in\NN}$ is not Noetherian. Suppose the Rees algebra $R[\mathcal P]$ is a finitely generated $R$-algebra and generated by elements of degree up to $l$ for some $l\geq 1$. Note that $gx^{2(l+1)+1}\in R[\mathcal P]_{l+1}$. Then there exists elements $m_i\in J_{t_i}$ for some $1\leq i\leq r$ with $1\leq t_i\leq l$ such that $gx^{2(l+1)+1}=\sum r_\alpha(gm_{1})^{\alpha_1}\cdots (gm_{r})^{\alpha_r}$ for some $r_\alpha\in R$ with $\sum\limits_{i=1}^rt_i\alpha_i=l+1$. Since $\sum\limits_{i=1}^r\alpha_i\geq 2$, we get, \begin{equation}\label{2eq}x^{2(l+1)+1}=\sum r_{\alpha}g^{c_\alpha}m_{1}^{\alpha_1}\cdots m_{r}^{\alpha_r}\end{equation} where $c_\alpha=\sum\limits_{i=1}^r\alpha_i-1$.
 Hence $m_{i}\in (x^{2t_i+1}) R$ and $y\nmid m_i$ for all $1\leq i\leq r$.  Thus $\sum\limits_{i=1}^r\alpha_i\geq 2$ implies $\sum\limits_{i=1}^r(2\alpha_it_i+\alpha_i)=2(l+1) +\sum\limits_{i=1}^r\alpha_i>2(l+1)+1$  which  contradicts equation (\ref{2eq}).} 
\end{Remark}	
	\begin{Theorem}\label{Main}
		Let $(R,\mm)$ be an analytically unramified  local ring of dimension $d\geq 1$ and $K$ be any $\mm$-primary ideal in $R$. 
		\begin{enumerate}
		\item[$(1)$]  Let $\mathcal I=\{I_m\}_{m\in\NN}$ be a filtration of ideals and $\mathcal I$ satisfy $\mathcal A(r)$ condition for some $r\in\ZZ_{>0}$. Then 
					\begin{enumerate} 
		\item[$(i)$] $\epsilon(\mathcal I)=\displaystyle d!\lim\limits_{n\to \infty}\frac{\lambda\big(H^0_{\mm}(R/(I_n:K))\big)}{n^d}.$\vspace{2mm}
		\item [$(ii)$] $\epsilon(\mathcal F(n))$ is a limit  where 
		$\mathcal F(n)=\{F(n)_m=(I_{nm}:K^m) \}_{m\in\NN}$ is a graded family of ideals for all $n\in\ZZ_{>0}$ and \vspace{1mm} $$\epsilon(\mathcal I)=\displaystyle\lim\limits_{n\to\infty}\frac{\epsilon(\mathcal F(n))}{n^d}.$$
	\end{enumerate} 
		\item[$(2)$] Let $\mathcal I=\{I^m\}_{m\in\NN}$ for some ideal $I$ in $R$. For each $n\in\ZZ_{>0}$, let $J_n$ be an ideal in $R$ such that $I^n\subseteq J_n\subseteq (I^n:K)$.
		Then
	$$\epsilon (I)=\displaystyle\lim\limits_{n\to \infty}\frac{\epsilon (J_n)}{n^d}.$$
		\end{enumerate}
	\end{Theorem}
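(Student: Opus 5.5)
The plan is to rewrite every epsilon multiplicity as a colength inside a saturation, and then compare the families through the containments $I_m\subseteq (I_m:K)\subseteq I_m^{\sat}$. By Remark \ref{nzd}(1), $(I_n:K)^{\sat}=I_n^{\sat}$. Hence
$$\lambda\big(H^0_{\mm}(R/(I_n:K))\big)=\lambda\big(I_n^{\sat}/(I_n:K)\big)=\lambda\big(I_n^{\sat}/I_n\big)-\lambda\big((I_n:K)/I_n\big).$$
By the consequence of \cite[Theorem 6.1]{C1} quoted in the introduction, $d!\,\lambda(I_n^{\sat}/I_n)/n^d\to\epsilon(\mathcal I)$. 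So part (1)(i) reduces to the single estimate
$$\lambda\big((I_n:K)/I_n\big)=o(n^d).$$
Theorem \ref{Th3} already gives the inequality in one direction, so only this estimate is needed for the reverse inequality.

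\textbf{Key step for (1)(i).} Fix $t$ with $\mm^t\subseteq K$, so that $(I_n:K)\subseteq (I_n:\mm^t)$.
\begin{enumerate}
\item First I truncate with $\mathcal A(r)$. Since $(I_n:\mm^t)\subseteq I_n^{\sat}$, we get $(I_n:\mm^t)\cap \mm^{rn}\subseteq I_n^{\sat}\cap\mm^{rn}=I_n\cap \mm^{rn}$. So every quotient of interest is a subquotient of $R/\mm^{rn}$, which is the setting of Cutkosky's Okounkov-body method in \cite{C1}.
\item I pass to $\hat R$ and reduce to a domain by going modulo minimal primes, using Remark \ref{nzd1}. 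There I choose a valuation $\nu$ with values in $\ZZ^d$ that has the usual bounded-index property relative to $\mm$.
\item The graded families $\{I_n^{\sat}\cap\mm^{rn}\}$ and $\{I_n\cap\mm^{rn}\}$ then give semigroups whose degree-$n$ slices count the corresponding lengths.
\item The element $f=x^t$, for $x\in\mm$ with bounded value, satisfies $f\,(I_n:\mm^t)\subseteq I_n$. Consequently the $\nu$-values of $(I_n:\mm^t)$ and of $I_n$ differ only within a shell of bounded width $\nu(f)$ around the boundary of the Okounkov body.
\item A shell of bounded width contributes $O(n^{d-1})$ lattice points, which gives the $o(n^d)$ bound.
\end{enumerate}
This lattice-point/shell argument is the step I expect to be the main obstacle. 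If it proves unwieldy, my fallback is to bound $\lambda((I_n:\mm^t)/I_n)$ by $\lambda\big(I_{n-s}\cap\ldots/I_n\big)$, that is, by differences of the convergent sequence $\lambda(I_n^{\sat}/I_n)/n^d$ at indices differing by a bounded amount.

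\textbf{Part (1)(ii).} First, $\mathcal F(n)$ is graded, because $(I_{nm}:K^m)(I_{nm'}:K^{m'})K^{m+m'}\subseteq I_{n(m+m')}$. Next, $F(n)_m^{\sat}=I_{nm}^{\sat}$, again by Remark \ref{nzd}(1) since $K^m$ is $\mm$-primary. I will then use the sandwich
$$(I_n:K)^m\subseteq F(n)_m\subseteq (I_{nm}:\mm^{tm}).$$
\begin{itemize}
\item Upper bound: $\lambda(I_{nm}^{\sat}/F(n)_m)\le\lambda(I_{nm}^{\sat}/I_{nm})$. Once existence of the limit $\epsilon(\mathcal F(n))$ is known, this gives $\epsilon(\mathcal F(n))\le n^d\epsilon(\mathcal I)$.
\item Existence of the limit: I would obtain it from \cite[Theorem 6.1]{C1} after checking an $\mathcal A(r')$-type condition $F(n)_m^{\sat}\cap\mm^{r'm}\subseteq F(n)_m$. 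Taking $r'=rn$ works, by $\mathcal A(r)$ and $I_{nm}\subseteq F(n)_m$.
\item Lower bound: here I redo the shell estimate of part (1)(i) with colon exponent $tm$ at level $nm$. The shell now has width proportional to $tm$ against a body of size $nm$. Its contribution is therefore at most $C(t/n)\,(nm)^d$ with $C(t/n)\to0$ as $n\to\infty$. Dividing by $n^d$ and letting $n\to\infty$ gives $\epsilon(\mathcal I)$.
\end{itemize}

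\textbf{Part (2).} Take the filtration $\{I^m\}$, which satisfies $\mathcal A(r)$ by Artin--Rees. For each $n$ we have
$$I^{nm}\subseteq J_n^m\subseteq (I^n:K)^m\subseteq (I^{nm}:K^m)=F(n)_m,$$
and all four ideals share the saturation $(I^{nm})^{\sat}$. This yields
$$\epsilon(\mathcal F(n))\le \epsilon(J_n)\le n^d\epsilon(I),$$
where $\epsilon(J_n)$ is a limit by Theorem \ref{TheoremI1}. Dividing by $n^d$ and applying (1)(ii) gives $\epsilon(J_n)/n^d\to\epsilon(I)$.
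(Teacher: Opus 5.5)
Your strategy is sound, but it takes a genuinely different route from the paper's. The paper never isolates the estimate $\lambda((I_n:K)/I_n)=o(n^d)$. Instead it fixes a nonzerodivisor $b\in K$ and builds twisted graded families: $L(n)_m=b^{m+1}(I_{nm}:K)$ and $T(n)_m=b^{m+1}I_{nm}^{\sat}$ for (1)(i), and $G(n)_m=b^{m+1}F(n)_m$ for (1)(ii). Using Artin--Rees for $bR$, it checks that these satisfy the hypotheses of the Cutkosky--Landsittel theorem \cite[Theorem 4.1]{CL2024}, which lets it interchange the limits. Together with the existence result \cite[Theorem 4.4]{Pa2025}, this gives (1)(i) and (1)(ii). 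Your part (2) is the paper's argument. Your subtraction $\lambda(I_n^{\sat}/(I_n:K))=\lambda(I_n^{\sat}/I_n)-\lambda((I_n:K)/I_n)$ needs neither \cite{CL2024} nor \cite{Pa2025}, reproves the existence part of Theorem \ref{Th3}, and yields a rate. The paper's route is shorter once those black boxes are granted, and it fits the multiplicity$=$volume framework.

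As written, your key step is only a heuristic. The inclusion $f(I_n:\mm^t)\subseteq I_n$ does not tell you where the extra values of $(I_n:\mm^t)$ lie; Okounkov bodies control counts only up to $o(n^d)$, not positions. What is true is a counting statement, whose thin slab sits at the truncation level $\mm^{rn}$, and it needs no valuations.
\begin{itemize}
\item Take a nonzerodivisor $f\in K$; one exists because $R$ is reduced and $d\ge1$. Choose $k\ge1$ with $fR\cap\mm^s\subseteq f\mm^{s-k}$ for $s\ge k$.
\item Set $J'=(I_n:K)$ and $J=I_n$. By $\mathcal A(r)$, $J'\cap\mm^{rn}=J\cap\mm^{rn}$, so $\lambda(J'/J)=\lambda(J'/J'\cap\mm^{rn})-\lambda(J/J\cap\mm^{rn})$.
\item Multiplication by $f$ maps $J'/(J'\cap\mm^{rn})$ into $J/(J\cap\mm^{rn+1})$. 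Its kernel lies in $(J'\cap\mm^{rn+1-k})/(J'\cap\mm^{rn})$.
\end{itemize}
Hence, for $n\ge k/r$,
$$\lambda((I_n:K)/I_n)\le\lambda(\mm^{rn}/\mm^{rn+1})+\lambda(\mm^{rn+1-k}/\mm^{rn})=O(n^{d-1}).$$
Now take $f=b^m$, $J'=F(n)_m$, $J=I_{nm}$, and iterate Artin--Rees. This gives
$$\lambda(F(n)_m/I_{nm})\le\lambda(\mm^{rnm}/\mm^{(rn+1)m})+\lambda(\mm^{(rn+1-k)m}/\mm^{rnm}).$$
After dividing by $m^d$, the right side is $O(n^{d-1})$, which is exactly your lower bound in (1)(ii). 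No passage to $\hat R$ or to domains is needed.

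Two smaller points need fixing.
\begin{itemize}
\item $\mathcal F(n)$ need not be a filtration, so the $\mathcal A(rn)$ condition alone does not let you invoke \cite[Theorem 6.1]{C1}. You must also check, as the paper does, that $F(n)_1\subseteq P\in\Min(R)$ forces $F(n)_m\subseteq P$. This follows from $xK^{m-1}\subseteq(I_{nm}:K)\subseteq(I_n:K)$ together with $K\not\subseteq P$.
\item The $\mathcal A(r)$ property of $\{I^m\}$ is Swanson's theorem \cite[Theorem 3.4]{Sw}, not plain Artin--Rees.
\end{itemize}
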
	
	\begin{proof}
		Let $b\in R^o\cap K$. Then $\{b(I_m:K)\}_{m\in\NN}$ and $\{b(I_m:K)^{\sat}\}_{m\in\NN}$ are filtrations of ideals in $R$.  Since $\hat{R}$ is a faithfully flat extension of $R$, we can replace $R$, $I_n$ and $K$ by $\hat{R}$, $I^n\hat{R}$ and $K\hat{R}$ respectively. Note that $\hat{R}$ is reduced. Hence by Remark \ref{nzd1}, $b$ is a nonzerodivisor.	By  Artin-Rees lemma, there exists a positive integer $k$ such that for all $n\geq k$,
		\begin{equation}\label{eq1}bR\cap \mm^n=\mm^{n-k}(bR\cap \mm^k)\subset b\mm^{n-k}.\end{equation}		
	$(i)$	For all integers $n\geq 1$, consider the  families of ideals
		\begin{enumerate} 
			\item $\mathcal L=\{L_0=R, L_m=b^2I_m\mbox{ for all }m\geq 1\}$,	
			\item $\mathcal L(n)=\{L(n)_0=R, L(n)_1=b^2I_n, L(n)_m=b^{m+1}(I_{nm}:K) \mbox{ for all }m\geq 2\}$,	
			\item $\mathcal T=\{T_0=R, T_m=b^2I_m^{\sat}\mbox{ for all }m\geq 1\}$,
			\item $\mathcal T(n)=\{T(n)_0=R,  T(n)_m=b^{m+1}(I_{nm}:K)^{\sat}=b^{m+1}I_{nm}^{\sat}\mbox{ for all }m\geq 1\}$.
			\end{enumerate}
			Since $\mathcal I$ is a filtration of ideals, we have, $\mathcal L$, $\mathcal T$ are filtrations of ideals. 
			We show that $\mathcal L(n)$ and $\mathcal T(n)$ are graded families of ideals for all $n\geq 1$. For all $m, m'\geq 1$, using Remark \ref{nzd} $(2)(a)$ and $(2)(b)$, we get,
			\begin{eqnarray*}L(n)_mL(n)_{m'}\subseteq b^{m+1}(I_{nm}:K)b^{m'+1}(I_{nm'}:K)&\subseteq& b^{m+m'}b(I_{nm}:K)b(I_{nm'}:K)\\ &\subseteq&  b^{m+m'}b(I_{n(m+m')}:K)=L(n)_{m+m'},\end{eqnarray*}
			(since $L(n)_1=b^2I_n$, we have $L(n)_mL(n)_{m'}\subseteq b^{m+1}(I_{nm}:K)b^{m'+1}(I_{nm'}:K)$ if either $m=1$ or $m'=1$, otherwise $L(n)_mL(n)_m'= b^{m+1}(I_{nm}:K)b^{m'+1}(I_{nm'}:K)$) and 
			\begin{eqnarray*}T(n)_mT(n)_{m'}=b^{m+1}(I_{nm}:K)^{\sat}b^{m'+1}(I_{nm'}:K)^{\sat}&\subseteq& b^{m+m'}b(I_{nm}:K)^{\sat}b(I_{nm'}:K)^{\sat}\\ &\subseteq&  b^{m+m'}b(I_{n(m+m')}:K)^{\sat}=T(n)_{m+m'}.\end{eqnarray*}
		 For all integers $n\geq 1$ and $m\geq 2$, we have
		 \begin{enumerate}
		 	\item $L(n)_1=L_n$, $T(n)_1=T_n$,
		 	\item $L(n)_m=b^{m+1}(I_{nm}:K)=b^mb(I_{nm}:K)\subseteq b^mI_{nm}\subseteq b^2I_{nm}=L_{nm}$.
		 	\item $T(n)_m=b^{m+1}I_{nm}^{\sat}\subseteq b^2I_{nm}^{\sat}=T_{nm}.$
		 \end{enumerate}	
Since $\{I_m\}_{m\in\NN}$  satisfies $\mathcal A(r)$ condition for some $r\in\ZZ_{>0}$, we have $I_m^{\sat}\cap \mm^{rm}=I_m\cap \mm^{rm}$ for all $m\geq 1$. Let $c=r+2k$. We show that for all integers $m,n\geq 1$, we have $$L(n)_m\cap\mm^{cmn}=T(n)_m\cap \mm^{cmn}.$$ By definition of $\mathcal L(n)$ and $\mathcal T(n)$, we have $L(n)_m\cap\mm^{cmn}\subseteq T(n)_m\cap \mm^{cmn}$ for all $m,n\geq 1$. Let $b^{m+1}a\in  T(n)_m\cap \mm^{cmn}$ for some $a\in I_{nm}^{\sat}$. Since $b$ is a nonzerodivisor, we get, $$b^{m+1}a\in \mm^{cmn}\cap bR\subseteq b\mm^{cmn-k} \mbox{ and } b^ma\in \mm^{cmn-k}.$$ As $cmn=rmn+2kmn$ and $b$ is a nonzerodivisor, by repeated application of the Artin-Rees lemma, we obtain $a\in \mm^{rnm}$. Thus $a\in \mm^{rnm}\cap I_{nm}^{\sat}\subseteq I_{nm}$.
Hence for $m=1$, we get $b^2a\in L(n)_1\cap\mm^{cn}$  and for all  $m\geq 2$, we have, $$b^{m+1}a\in b^{m+1}I_{nm}\cap\mm^{cmn}\subseteq b^{m+1}(I_{nm}:K)\cap\mm^{cmn}= L(n)_m\cap\mm^{cmn}.$$ 
Therefore by \cite[Theorem 4.1]{CL2024}, both the limits $\displaystyle \lim\limits_{n\to\infty}\frac{\lambda_R(T_n/L_n)}{n^d}$, $\displaystyle\lim\limits_{m\to\infty}{\frac{\lambda_R(T(n)_m/L(n)_m)}{m^d}}$ exist and  $$\displaystyle\lim\limits_{n\to\infty}\lambda_R(T_n/L_n)/n^d=\lim\limits_{n\to\infty}\frac{1}{n^d}\Big({\lim\limits_{m\to\infty}{\frac{\lambda_R(T(n)_m/L(n)_m)}{m^d}}}\Big).$$ Since by \cite[Theorem 4.4]{Pa2025}, $\displaystyle\lim\limits_{n\to \infty}\frac{\lambda_R\big(H^0_{\mm}(R/(I_n:K))\big)}{n^d}={\lim\limits_{n\to\infty}{\frac{\lambda_R((I_{n}:K)^{\sat}/(I_{n}:K))}{n^d}}}$ exists, we get
		\begin{eqnarray*}\displaystyle
	\epsilon(\mathcal I)=d!	\lim\limits_{n\to\infty}{\frac{\lambda_R(	I_n^{\sat}/	I_n)}{n^d}}&=&d!\lim\limits_{n\to\infty}{\frac{\lambda_R(	b^2I_n^{\sat}/	b^2I_n)}{n^d}}= d!\lim\limits_{n\to\infty}{\frac{\lambda_R(	T_n/L_n)}{n^d}}\\&=&d!\lim\limits_{n\to\infty}\frac{1}{n^d}\Big({\lim\limits_{m\to\infty}{\frac{\lambda_R(T(n)_m/L(n)_m)}{m^d}}}\Big)\\&=&d!\lim\limits_{n\to\infty}\frac{1}{n^d}\Big({\lim\limits_{m\to\infty}{\frac{\lambda_R(b^{m+1}I_{nm}^{\sat}/b^{m+1}(I_{nm}:K))}{m^d}}}\Big)\\&=&d!\lim\limits_{n\to\infty}\frac{1}{n^d}\Big({\lim\limits_{m\to\infty}{\frac{\lambda_R(I_{nm}^{\sat}/(I_{nm}:K))}{m^d}}}\Big)\\&=&d!\lim\limits_{n\to\infty}\frac{n^d}{n^d}\Big({\lim\limits_{m\to\infty}{\frac{\lambda_R((I_{nm}:K)^{\sat}/(I_{nm}:K))}{(nm)^d}}}\Big)\\&=&
\displaystyle d!\lim\limits_{n\to \infty}\frac{\lambda_R\big(H^0_{\mm}(R/(I_n:K))\big)}{n^d},
	\end{eqnarray*} where the second last equality follows from Remark \ref{nzd} $(1)$.  
	\\$(ii)$  Consider the families of ideals
	\begin{enumerate} 
		\item $\mathcal G=\{G_0=R, G_m=b^2(I_m:K)\mbox{ for all }m\geq 1\}$,	
		\item $\mathcal G(n)=\{G(n)_0=R,  G(n)_m=b^{m+1}F(n)_m \mbox{ for all }m\geq 1\}$.
\end{enumerate} Since $\mathcal I$ is a filtration, by Remark \ref{nzd} $(2)(a)$, $\mathcal G$ is a filtration of ideals and since $\mathcal F(n)$ is a graded family of ideal, we have $\mathcal G(n)$ is a graded family of ideals for all $n\geq 1$. 	
\\For all integers $n\geq 1$, we have
	$G(n)_1=G_n$. For any integers  $n\geq 1$ and $m\geq 2$, let  $b^{m+1}a\in G(n)_m$ for some $a\in F(n)_m= (I_{nm}:K^m)$. Since $b\in K$, we have $b^{m-1}aK\subseteq aK^m\subseteq I_{nm}$. Therefore $b^{m-1}a\in(I_{nm}:K)$ and $b^{m+1}a=b^2b^{m-1}a\in b^2(I_{nm}:K)=G_{nm}$. Hence $G(n)_m\subseteq G_{nm}$ for all $n\geq 1$ and $m\geq 2$.
	\\ The graded families $\mathcal T$, $\mathcal L(n)$ and $\mathcal T(n)$ are the same graded families defined in part $(i)$.
	Since $L(n)_m\subseteq G(n)_m$ for all $n,m\geq 1$, for the same $c$ as in part $(i)$, we have  \begin{equation}\label{eq2} G(n)_m\cap\mm^{cmn}=T(n)_m\cap \mm^{cmn}\mbox{ for all integers }m,n\geq 1.\end{equation} 
	Therefore by \cite[Theorem 4.1]{CL2024}, we get, both the limits \\$\displaystyle \lim\limits_{n\to\infty}\frac{\lambda_R(T_n/G_n)}{n^d}, \displaystyle\lim\limits_{m\to\infty}{\frac{\lambda_R(T(n)_m/G(n)_m)}{m^d}}$ exist and 
	$$ \displaystyle\lim\limits_{n\to\infty}\lambda_R(T_n/G_n)/n^d =\lim\limits_{n\to\infty}\frac{1}{n^d}\Big(\lim\limits_{m\to\infty}{\frac{\lambda_R(T(n)_m/G(n)_m)}{m^d}}\Big).$$
	Now we show that $\epsilon (\mathcal F(n))$ is a limit for all $n\geq 1$. Fix $n\geq 1$. By Remark \ref{nzd} $(1)$, for all $m\geq 1$, we have \begin{equation}\label{eq4}b^{m+1}F(n)_m^{\sat}=b^{m+1}(I_{nm}:K^m)^{\sat}=b^{m+1}I_{nm}^{\sat}=T(n)_m.\end{equation} Thus for all $m\geq 1$ and for any $x\in F(n)_m^{\sat}\cap\mm^{cmn}$, using equation (\ref{eq2}), we have $$b^{m+1}x\in b^{m+1}F(n)_m^{\sat}\cap\mm^{cmn}= T(n)_m\cap\mm^{cmn}=G(n)_m\cap\mm^{cmn}\subseteq G(n)_m=b^{m+1}F(n)_m.$$ Since $b$ is a nonzerodivisor, we have $x\in F(n)_m$. Therefore, for all $m\geq 1$ and $c'=cn$, we get,
	$$F(n)_m\cap \mm^{c'm}=F(n)_m^{\sat}\cap \mm^{c'm}.$$ 
	Suppose $F(n)_1\subseteq P$ for some $P\in\Min(R)$ with $\dim (R/P)=d$. We show that $F(n)_m\subseteq P$ for all $m\in\ZZ_{>0}$.
		Let $x\in F(n)_m=(I_{nm}:K^m)$. Then  $xK^{m-1}\subseteq (I_{nm}:K)\subseteq (I_n:K)=F(n)_1\subseteq P$. Since $d\geq 1$ and $K$ is $\mm$-primary, we get $x\in P$. Thus by \cite[Theorem 6.1]{C1}, we have, $\epsilon(\mathcal F(n))$ is a limit. 
	 \\Using equation (\ref{eq4}), we obtain,
	 \begin{eqnarray*}
	 {\lim\limits_{m\to\infty}d!{\frac{\lambda_R(T(n)_m/G(n)_m)}{m^d}}}&=&\lim\limits_{m\to\infty}d!{\frac{\lambda_R(b^{m+1}(I_{nm}:K^m)^{\sat}/b^{m+1}(I_{nm}:K^m) )}{m^d}}\\&=& \lim\limits_{m\to\infty}d!{\frac{\lambda_R(b^{m+1}F(n)_m^{\sat}/b^{m+1}F(n)_m )}{m^d}}
	 \\&=&\lim\limits_{m\to\infty}d!{\frac{\lambda_R(F(n)_m^{\sat}/F(n)_m )}{m^d}}=\epsilon(\mathcal F(n)).
	 	\end{eqnarray*} Hence by part $(i)$, we get 
	 \begin{eqnarray*}\epsilon(\mathcal I)=\displaystyle
	 		\lim\limits_{n\to \infty}d!\frac{\lambda\big(H^0_{\mm}(R/(I_n:K))\big)}{n^d}&=&\displaystyle{\lim\limits_{n\to\infty}d!{\frac{\lambda_R(b^2I_{n}^{\sat}/b^2(I_{n}:K))}{n^d}}}=	\lim\limits_{n\to\infty}d!\frac{\lambda_R(T_n/G_n)}{n^d}\\&=&\displaystyle\lim\limits_{n\to\infty}\frac{1}{n^d}\big({\lim\limits_{m\to\infty}d!{\frac{\lambda_R(T(n)_m/G(n)_m)}{m^d}}}\big)=\lim\limits_{n\to\infty}\frac{\epsilon(\mathcal F(n))}{n^d}.
	 	\end{eqnarray*}
$(2)$ Since $I^n\subseteq J_n\subseteq (I^n:K)$ and $K$ is an $\mm$-primary ideal , we have $$I^{nm}\subseteq J_n^m\subseteq (I^n:K)^m\subseteq F(n)_m:= (I^{nm}:K^m)\subseteq (I^{nm}:K^m)^{\sat}=(I^{nm})^{\sat}$$ for all $n, m\in\ZZ_{>0}$. Hence $$(J_n^m)^{\sat}= ((I^n:K)^m)^{\sat}= (I^{nm}:K^m)^{\sat}=(I^{nm})^{\sat}$$ for all $n, m\in\ZZ_{>0}$. Thus for all $n\geq 1$, we get \begin{eqnarray}\label{eqeq}\displaystyle\frac{\epsilon(J_n)}{n^d}=\frac{1}{n^d}\Big(\lim\limits_{m\to\infty}\frac{\lambda_R((J_n^m)^{\sat}/J_n^m)}{m^d}\Big)\nonumber&\leq& \frac{1}{n^d}\Big(\lim\limits_{m\to\infty}\frac{\lambda_R((I^{nm})^{\sat}/I^{nm})}{m^d}\Big)\\&=&\lim\limits_{m\to\infty}\frac{\lambda_R((I^{nm})^{\sat}/I^{nm})}{(nm)^d}=\epsilon(I).\end{eqnarray}	
Let $\mathcal F(n)=\{F(n)_m= (I^{nm}:K^m)\}_{m\in\NN}$ for all $n\in\ZZ_{>0}$. Therefore by part $(1)$, we get 
\begin{eqnarray*}
	\displaystyle\epsilon(I)=\lim\limits_{n\to\infty}\frac{\epsilon(\mathcal F(n))}{n^d}\leq\liminf_{n\to\infty} \frac{\epsilon(J_n)}{n^d}\leq \limsup_{n\to\infty} \frac{\epsilon(J_n)}{n^d}\leq \epsilon(I).
\end{eqnarray*}	
		\end{proof}
		\begin{Example} {\label{ex1}}{\em We give an example of a Noetherian filtration $\mathcal I$ such that all the graded families $\mathcal F(n)$ are non-Notherian for all $n\in\ZZ_{>0}$.
				\\ Consider the subring $k[x,y^2,y^3]$ of a polynomial ring $k[x,y]$ over a field $k$. Let $R=k[x,y^2,y^3]_{(x,y^2,y^3)}$, $\mm=(x,y^2,y^3)R$. 
				\\Take the ideal  $I=(x^2,xy^2)R$ and   the filtartion $\mathcal I=\{I^m\}_{m\in\NN}$. By \cite[Theorem 3.4]{Sw}, $\mathcal I$ satisfies $\mathcal A(r)$ for some $r\in\ZZ_{>0}$.
				Let $J=(x,y^2)$. Then $I=xJ$. Since $\mm^2=J\mm$, by induction, we get   $\mm^t=J^{t-1}\mm$ for all $t\geq 2$. 
				\\Now for all $m\geq 1$,  $I^m\subseteq x^mR$ implies $(I^m)^{\sat}\subseteq x^mR$ and $x^m\mm^{m+1}=x^mJ^m\mm\subseteq I^m$ implies $x^m\in (I^m:\mm^{m+1})\subseteq (I^m)^{\sat}$. Therefore  $(I^m)^{\sat}=x^mR$ for all $m\geq 1$. 
				\\Since $x,y^2$ is a regular sequence in $R$, we have $J^p/J^{p+1}\simeq (R/J)^{p+1}$ for all $p\geq 1$, $\lambda_R(R/J)=2$ 
				and $\lambda_R(J^p/J^{p+1})=2(p+1)$. Since $$\lambda_R((I^m)^{\sat}/I^m)=\lambda_R(R/J^m)=\sum\limits_{p=0}^{m-1}\lambda_R(J^p/J^{p+1})=\sum\limits_{p=0}^{m-1}2(p+1)=m(m+1),$$ we have 
				$$\displaystyle\epsilon(I)= 2\lim\limits_{m\rightarrow \infty}\frac{\lambda_R(H^0_{\mm}(R/I^m))}{m^2}=2.$$
				Now we show that $(I^{nm}:\mm^m)=x^{nm}J^{nm-m}\mm$ for all $n,m\in\ZZ_{>0}$.
				For all $m,n\geq 1$, we have, $$x^{nm}J^{nm-m}\mm^{m+1}=x^{nm}J^{nm-m}J^m\mm=I^{nm}\mm\subseteq I^{nm}\mbox{ and hence }x^{nm}J^{nm-m}\mm\subseteq (I^{nm}:\mm^m).$$ Now we show the converse. It is enough to show the following inclusion for the monomial ideals $((x^2,xy^2)^{nm}:(x,y^2,y^3)^m)=x^{mn}(x,y^2)^{nm-m}(x,y^2,y^3)$ in $k[x,y^2,y^3]$. We use the same notation $I$, $J$ and $\mm$ for the above monomial ideals.
				Let $m,n\geq 1$ and $a\in (I^{nm}:\mm^m)$ be a monomial. Then $a(y^2)^{m}\in I^{nm}=x^{nm}J^{nm}$ implies $a\in (x^{nm})$ as $x,y^2$ is a regular sequence. Therefore $a=vx^{nm}$ for some monomial $v$ in $k[x,y^2,y^3]$ and $vx^{nm}\mm^m\subseteq x^{nm}J^{nm}$ implies $v\mm^m\subseteq J^{nm}$. We show that $v\in J^{nm-m}\mm$.
				\\ As $(y^3)^2=(y^2)^3$, every monomial in $k[x,y^2,y^3]$ is either of the form $x^p(y^2)^q$ or $x^p(y^2)^qy^3$ for some $p,q\in\NN$. If $v=x^p(y^2)^q$ then $vy^{2m+1}=v(y^2)^{m-1}y^3\in v\mm^m\subseteq J^{nm}$ implies $p+q\geq nm-m+1$ and $v\in J^{nm-m+1}\subseteq J^{nm-m}\mm$ \\(Suppose $p+q<nm-m+1$. Since  $v(y^2)^{m-1}y^3\in J^{nm}$, we have $v(y^2)^{m-1}y^3=x^{nm-i}(y^2)^iu$ for some $0\leq i\leq nm$ and for some monomial $u$ in $k[x,y^2,y^3]$. Thus $nm-i\leq p$. Then $q+m-1<nm-p\leq i$ and hence $x^{p-nm-i}y^3=(y^2)^{i-(q+m-1)}u$ which is a contraditction as $y\notin k[x,y^2,y^3]$). \vspace{1mm}
			\\	If $v=x^p(y^2)^qy^3$ then $v(y^2)^{m}\in v\mm^m\subseteq J^{nm}$ implies $p+q+m\geq nm$ and hence $p+q\geq nm-m$. Therefore $v=x^p(y^2)^qy^3\in J^{nm-m}\mm$. Thus $a\in x^{nm}J^{nm-m}\mm$. 
				\vspace{1mm}
				\\Since $x^{2mn-m+1}$ is a minimal generator of $(I^{nm}:\mm^m)$ for all $n,m\in\ZZ_{>0}$, the graded families $\mathcal F(n)=\{(I^{nm}:\mm^m)\}_{m\in\NN}$ are non-Noetherian for all $n\in\ZZ_{>0}$.\vspace{2mm} 
				\\Now for all $n,m\in\ZZ_{>0}$, $(I^{nm}:\mm^n)=x^{nm}J^{nm-m}\mm=x^{nm}\mm^{nm-m+1}$ (for $n=1$, it is clear and for $n\geq 2$, use $\mm^n=\mm J^{n-1}$). We have $(I^{nm}:\mm^m)^{\sat}=(I^{nm})^{\sat}=(x^{nm})$. Therefore for all $n,m\in\ZZ_{>0}$, $$\lambda_R(H_\mm^0(R/(I^{nm}:\mm^n)))=\lambda_R((x^{nm})/x^{nm}\mm^{nm-m+1})=\lambda_R(R/\mm^{nm-m+1}).$$
				Since $\mm^p=J^p+y^3J^{p-1}$ for all $p\geq 1$, $\mm^p/\mm^{p+1}$ is a $R/\mm$-vector space of dimension $2p+1$.  
				Therefore $$\lambda_R(R/\mm^{nm-m+1})=\sum\limits_{p=0}^{nm-m}\lambda_R(\mm^p/\mm^{p+1})=\sum\limits_{p=0}^{nm-m} (2p+1)=(nm-m+1)^2.$$ Hence for all $n\geq 1$, 
				$$\epsilon(\mathcal F(n))=2\lim\limits_{m\rightarrow \infty}\frac{\lambda_R(H^0_{\mm}(R/F(n)_m))}{m^2}=2\lim\limits_{m\rightarrow \infty}\frac{\lambda_R(H^0_{\mm}(R/(I^{nm}:\mm^m)))}{m^2}=2(n-1)^2 \mbox{ and }$$
				$$\displaystyle\epsilon(I)=2=\lim\limits_{n\to\infty}\frac{\epsilon(\mathcal F(n))}{n^2}.$$	}
		\end{Example}	
	We conclude this section by investigating the maximality of the analytic spread of filtrations of ideals.  As a consequence of Theorem \ref{Main}, we get the following Corollary.
		\begin{Corollary}{\label{cor1}}
			Let $(R,\mm)$ be an analytically unramified  local ring of dimension $d\geq 1$, $I$ be an ideal in $R$ and $K$ be any $\mm$-primary ideal in $R$. For each $n\in\ZZ_{>0}$, let $J_n$ be an ideal in $R$ such that $I^n\subseteq J_n\subseteq (I^n:K)$. Then the following are equivalent.
			\begin{enumerate}
				\item[$(i)$] $\ell(I)=d$.
				\item[$(ii)$]  $\ell(J_n)=d$ for all $n\gg0$. 
				\item[$(iii)$]  $\ell(J_n)=d$ for some $n\geq 1$.
			\end{enumerate} 
		\end{Corollary}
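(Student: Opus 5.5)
The plan is to reduce everything to positivity of epsilon multiplicity via the theorem of Katz and Validashti \cite[Theorem 4.7]{KV}: in an analytically unramified local ring (indeed in any Noetherian local ring, after reducing to the completion, which is reduced here), an ideal $J$ satisfies $\epsilon(J)>0$ if and only if $\ell(J)=d$. Granting this, Corollary \ref{cor1} becomes a statement about $\epsilon(I)$ and the numbers $\epsilon(J_n)$, and Theorem \ref{Main}$(2)$ is exactly what relates them.

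For $(i)\Rightarrow(ii)$, I would assume $\ell(I)=d$, so that $\epsilon(I)>0$ by \cite[Theorem 4.7]{KV}. By Theorem \ref{Main}$(2)$ we have $\epsilon(I)=\lim_{n\to\infty}\epsilon(J_n)/n^d$. Hence $\epsilon(J_n)/n^d>\epsilon(I)/2>0$ for all $n\gg 0$, so $\epsilon(J_n)>0$ for all $n\gg0$. Applying \cite[Theorem 4.7]{KV} again gives $\ell(J_n)=d$ for all $n\gg0$. The implication $(ii)\Rightarrow(iii)$ is trivial.

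For $(iii)\Rightarrow(i)$, I would fix $n$ with $\ell(J_n)=d$, so that $\epsilon(J_n)>0$. The inequality (\ref{eqeq}) established in the proof of Theorem \ref{Main}$(2)$ gives $\epsilon(J_n)/n^d\le\epsilon(I)$ for every $n\ge1$. This relies only on $I^{nm}\subseteq J_n^m$ and $(J_n^m)^{\sat}=(I^{nm})^{\sat}$, which come from $I^n\subseteq J_n\subseteq (I^n:K)$ and Remark \ref{nzd}$(1)$. Hence $\epsilon(I)>0$, and so $\ell(I)=d$. As an alternative route, I could avoid epsilon multiplicities here and argue directly: $I^n\subseteq J_n$ with $J_n\subseteq (I^n)^{\sat}$ suggests comparing fiber cones, but the epsilon route is cleaner.

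The main obstacle is making sure \cite[Theorem 4.7]{KV} applies in the present generality, since the equivalence $\epsilon(J)>0\iff\ell(J)=d$ needs a hypothesis such as $R$ being formally equidimensional or analytically unramified. I would check this and, if necessary, pass to $\hat R$, which is reduced. This passage is harmless because $\ell$, $\epsilon$ and the colon ideals are all preserved under the faithfully flat map $R\to\hat R$, as is already used at the start of the proof of Theorem \ref{Main}. I would also note that the limit in the definition of $\epsilon(J_n)$ exists by Theorem \ref{TheoremI1}, so that the strict positivity statements are meaningful.
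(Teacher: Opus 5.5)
Your proposal is correct and essentially matches the paper's proof: you translate $\ell(\cdot)=d$ into $\epsilon(\cdot)>0$ via \cite[Theorem 4.7]{KV}, deduce $(i)\Rightarrow(ii)$ from the limit formula of Theorem \ref{Main}$(2)$, and deduce $(iii)\Rightarrow(i)$ from the inequality $\epsilon(J_n)/n^d\le\epsilon(I)$ in (\ref{eqeq}). Your extra step of checking that \cite[Theorem 4.7]{KV} applies in this generality, passing to the reduced completion if needed, is something the paper does not do; it simply cites the result.
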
		
		\begin{proof} By \cite[Theorem 4.7]{KV}, $\ell(I)=d$ if and only if $\epsilon(I)>0$. 
			\\$(i)\Rightarrow (ii)$ We have $\epsilon(I)>0$. Since $\epsilon(J_n)\geq 0$ for all $n\geq 1$, by Theorem \ref{Main} $(2)$, we get   $\epsilon(J_n)> 0$  for all $n\gg0$ and hence $\ell(J_n)=d$ for all $n\gg0$. $(ii)\Rightarrow (iii)$ is clear.  $(iii)\Rightarrow (i)$ The result follows from equation (\ref{eqeq}).
		\end{proof}	
\begin{Theorem}
	Let $(R,\mm)$ be an analytically unramified  local ring of dimension $d\geq 1$ and $K$ be any $\mm$-primary ideal in $R$. Let $\mathcal I=\{I_m\}_{m\in\NN}$ be a filtration of ideals in $R$. Consider the graded families of ideals $\mathcal F(n)=\{ F(n)_m=(I_{nm}:K^m) \}_{m\in\NN}$  for all integers $n\geq 1$.
	\begin{enumerate}
		\item[$(1)$] Supppose $\mathcal I$ satisfies $\mathcal A(r)$ condition for some $r\in\ZZ_{>0}$. Then the following are equivalent.	\begin{enumerate}
		\item[$(i)$] $\epsilon(\mathcal I)>0$,
		\item[$(ii)$] $\epsilon(\mathcal F(n))>0$ for all $n\gg0$,
		\item[$(iii)$] $\epsilon(\mathcal F(n))>0$ for some $n\geq 1$.
	\end{enumerate}
	\item[$(2)$] If $R$ is an
	excellent normal local domain of dimension $d\geq 1$ of equicharacteristic zero, or an arbitrary excellent local domain with $d\leq 3$ and $\mathcal I$ is a $\QQ$-divisorial filtration of ideals in $R$ then  the following are equivalent.
	\begin{enumerate}
		\item[$(i)$] $\ell(\mathcal I)=d$,
		\item[$(ii)$] $\ell(\mathcal F(n))=d$ for all $n\gg0$,
		\item[$(iii)$] $\ell(\mathcal F(n))=d$ for some $n\geq 1$. 
		\end{enumerate}	
\end{enumerate}	
\end{Theorem}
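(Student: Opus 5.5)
Part $(1)$ will come directly from Theorem \ref{Main} $(1)(ii)$, together with a comparison inequality $\epsilon(\mathcal F(n))\le n^d\epsilon(\mathcal I)$ that mirrors equation (\ref{eqeq}).

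\emph{Proof of $(1)$.} For $(i)\Rightarrow(ii)$: Theorem \ref{Main} $(1)(ii)$ gives $\epsilon(\mathcal F(n))/n^d\to\epsilon(\mathcal I)>0$, so $\epsilon(\mathcal F(n))>0$ for all $n\gg0$. The implication $(ii)\Rightarrow(iii)$ is trivial.

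For $(iii)\Rightarrow(i)$, compare the two families degree by degree. Since $K$ is $\mm$-primary, $I_{nm}\subseteq F(n)_m=(I_{nm}:K^m)$. By Remark \ref{nzd} $(1)$, $F(n)_m^{\sat}=I_{nm}^{\sat}$. Hence
$$\lambda_R(F(n)_m^{\sat}/F(n)_m)\le\lambda_R(I_{nm}^{\sat}/I_{nm}).$$
Divide by $m^d$ and let $m\to\infty$, using that both epsilon multiplicities are limits (Theorem \ref{Main} for $\mathcal F(n)$, and the $\mathcal A(r)$ consequence of \cite[Theorem 6.1]{C1} for $\mathcal I$). This gives $\epsilon(\mathcal F(n))\le n^d\epsilon(\mathcal I)$. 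Therefore $\epsilon(\mathcal F(n))>0$ for some $n$ forces $\epsilon(\mathcal I)>0$.

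\emph{Proof of $(2)$.} The plan is to reduce to $(1)$ via the positivity criterion \cite[Theorem 4.1]{CS2024}. Under the stated hypotheses on $R$, a $\QQ$-divisorial filtration satisfies $\ell(\mathcal I)=d$ if and only if $\epsilon(\mathcal I)>0$. A divisorial filtration is discrete valued, so it satisfies $\mathcal A(r)$ by \cite[Theorem 3.1]{CS2024}, and part $(1)$ applies. It then suffices to show, for each fixed $n$, that $\ell(\mathcal F(n))=d$ if and only if $\epsilon(\mathcal F(n))>0$.

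This is the main obstacle, because $\mathcal F(n)$ is in general neither divisorial nor Noetherian (Example \ref{ex1}). So \cite[Theorem 4.1]{CS2024} cannot be applied to $\mathcal F(n)$ directly. The approach is to compare $\mathcal F(n)$ with the Veronese-type filtration $\mathcal I^{(n)}=\{I_{nm}\}_{m}$, which is again $\QQ$-divisorial (with $a_i$ replaced by $na_i$).

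\emph{Analytic spread.} Since $K$ is $\mm$-primary, choose $t$ with $\mm^t\subseteq K$. Then
$$I_{nm}\subseteq F(n)_m\subseteq I_{nm}:\mm^{tm}.$$
Up to a fixed shift I expect $F(n)_m$ and $I_{nm}$ to agree after intersecting with $\mm^{cm}$, by the $\mathcal A(r)$-type equality $F(n)_m\cap\mm^{c'm}=F(n)_m^{\sat}\cap\mm^{c'm}$ established in the proof of Theorem \ref{Main}. I would use this to show that $R[\mathcal F(n)]/\mm R[\mathcal F(n)]$ and $R[\mathcal I^{(n)}]/\mm R[\mathcal I^{(n)}]$ have the same dimension. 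The tool is the description of analytic spread via growth of $\lambda(F(n)_m/\mm F(n)_m)$ from \cite{CS2022}, or an integral-type containment after multiplying by a fixed power of $b\in K\cap R^o$. Also, $\ell(\mathcal I^{(n)})=\ell(\mathcal I)$, since a Veronese subring does not change Krull dimension.

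\emph{Epsilon multiplicity.} By Theorem \ref{Main} and the inequality above, $\epsilon(\mathcal F(n))>0$ exactly when $\epsilon(\mathcal I)>0$, for $n\gg0$ and for some $n$. Combining this with $\ell(\mathcal F(n))=\ell(\mathcal I)$ and \cite[Theorem 4.1]{CS2024} yields the equivalences in $(2)$.

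If the analytic-spread comparison proves too delicate, my fallback is the Katz–Validashti style argument \cite[Theorem 4.7]{KV}, run through \cite[Theorem 6.1]{C1}. That argument relates $\epsilon>0$ to $\ell=d$ via the growth of $\lambda(F(n)_m/\mm F(n)_m)$ in degree $d-1$, for filtrations satisfying an $\mathcal A(c')$ condition, which we have for $\mathcal F(n)$.
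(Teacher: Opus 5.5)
Part $(1)$ is correct and is exactly the paper's argument. In part $(2)$, your reduction to showing, for each $n$, that $\ell(\mathcal F(n))=d$ iff $\epsilon(\mathcal F(n))>0$ has the right shape, but the route you propose fails. The identity $\ell(\mathcal F(n))=\ell(\mathcal I)$ is false for small $n$, and agreement of two filtrations after intersecting with $\mm^{cm}$ does not control analytic spread. For instance, let $R=k[x,y]_{(x,y)}$, let $\nu=\mathrm{ord}_{\mm}$, let $\mu$ be the $x$-adic valuation, and take $I_m=I(\nu)_{2m}\cap I(\mu)_m=x^m\mm^m$ and $K=\mm^2$. Then $R[\mathcal I]\cong R[\mm t]$, so $\ell(\mathcal I)=2$. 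On the other hand $F(1)_m=(\mm^{2m}:\mm^{2m})\cap(x^mR:\mm^{2m})=x^mR$, so $\ell(\mathcal F(1))=1$ and $\epsilon(\mathcal F(1))=0<1=\epsilon(\mathcal I)$. Yet $F(1)_m\cap\mm^{cm}=x^m\mm^{(c-1)m}=I_m\cap\mm^{cm}$ for all $c\ge 2$. More simply, an $\mm$-primary divisorial filtration and the trivial filtration $I_m=R$ agree after intersecting with $\mm^{cm}$ for $c\gg0$, but have analytic spreads $d$ and $1$. Your fallback does not close the gap either. The equivalence $\epsilon>0\Leftrightarrow\ell=d$ is not a consequence of an $\mathcal A(c')$ condition for arbitrary filtrations; it already fails for the trivial filtration when $d=1$. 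For filtrations it is precisely what \cite[Theorem 4.1]{CS2024} provides, and only for $\QQ$-divisorial filtrations under its hypotheses on $R$.

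The missing idea is that, in the setting of $(2)$, $\mathcal F(n)$ is itself $\QQ$-divisorial. Your claim that it is ``in general neither divisorial nor Noetherian (Example \ref{ex1})'' misreads the situation: Example \ref{ex1} is the power filtration of a non-integrally closed ideal in a non-normal ring. If $\nu$ is divisorial with center $\mm$ and $b=\nu(K)$, then $\nu(K^m)=mb$, hence $(I(\nu)_{\lceil nma\rceil}:K^m)=I(\nu)_{\lceil (na-b)m\rceil}$, which is $R$ when $na\le b$. If the center of $\nu$ is a non-maximal prime, taking the colon by the $\mm$-primary ideal $K^m$ changes nothing. Since colons commute with intersections, $\mathcal F(n)$ is the $\QQ$-divisorial filtration (\ref{10eq}). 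So \cite[Theorem 4.1]{CS2024} applies to $\mathcal F(n)$ directly, and part $(1)$ transfers positivity of $\epsilon$ between $\mathcal F(n)$ and $\mathcal I$. You must also handle the degenerate case in which every $\nu_i$ is centered at $\mm$. There $\mathcal F(n)$ may be the trivial filtration, for which the criterion breaks down, so for $(iii)\Rightarrow(i)$ the paper instead uses \cite[Theorem 1.5]{C2025} to get $\ell(\mathcal I)=d$ directly.
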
	
\begin{proof} 
$(1)$	$(i)\Rightarrow (ii)$ Since $\epsilon(\mathcal F(n))\geq 0$ for all $n\geq 1$, by Theorem \ref{Main} $(1)(ii)$, we get the required result.  $(ii)\Rightarrow (iii)$ is clear. For $(iii)\Rightarrow (i)$, note that, $ I_{nm}\subseteq(I_{nm}:K^m)$ and $(I_{nm}:K^m)^{\sat}=I_{nm}^{\sat}$ for all $n,m\geq 1$. Hence $\displaystyle\lambda_R\big(\frac{(I_{nm}:K^m)^{\sat}}{(I_{nm}:K^m)}\big)\leq \lambda_R\big(\frac{I_{nm}^{\sat}}{I_{nm}}\big)$ for all $n,m\geq 1$. Therefore for all $n\geq 1$, $$\displaystyle\epsilon(\mathcal F(n))=\lim\limits_{m\to \infty}\frac{\lambda_R\big({(I_{nm}:K^m)^{\sat}}/{(I_{nm}:K^m)}\big)}{m^d}\leq n^d\epsilon(\mathcal I).$$ Hence the result follows. 
	\\$(2)$ Now suppose $R$ is an
	excellent normal local domain of dimension $d\geq 1$ of equicharacteristic zero, or an arbitrary excellent local domain with $d\leq 3$ and $\mathcal I$ is a $\QQ$-divisorial filtration of ideals in $R$. By \cite[Theorem 3.1]{CS2024}, $\mathcal I$ satisfies $\mathcal A(r)$ condition for some $r\in\ZZ_{>0}$. 
	\\Let $\nu$ be an $\mm$-divisorial valuation of $R$ and $\nu(K):=\min\{\nu(r):r\in K\}=b$.  Thus $\nu(K^m)=mb$ for all $m\geq 1$. Let $z_m\in K^m$ such that $\nu(z_m)=mb$ for all $m\geq 1$. We show that $(I(\nu)_{\lceil nma\rceil}:K^m)=I(\nu)_{\lceil (na-b)m\rceil}$ for all $n,m\geq 1$ and any $a\in\QQ_{>0}$ and hence $\{ (I(\nu)_{\lceil nma\rceil}:K^m)=I(\nu)_{\lceil (na-b)m\rceil}\}_{m\in\NN}$ is a filtration of ideals. 
	\\Since $\nu$ is a discrete valuation, we have $b\in \ZZ$ and  \begin{eqnarray*}
		x\in (I(\nu)_{\lceil nma\rceil}:K^{m})\Leftrightarrow\nu(xz_{m})\geq {\lceil nma\rceil}&\Leftrightarrow& \nu(x)+mb\geq {\lceil nma\rceil}\\&\Leftrightarrow&\nu(x)\geq {\lceil nma-mb\rceil}\Leftrightarrow x\in I(\nu)_{\lceil (na-b)m\rceil}.
	\end{eqnarray*}
	Let $\mathcal I=\{I_m=I(\nu_1)_{\lceil ma_1\rceil}\cap\cdots\cap I(\nu_r)_{\lceil ma_r\rceil}\}_{m\in\NN}$ with $a_i\in\QQ_{>0}$ for all $1\leq i\leq r$, $\mm_{\nu_i}\cap R=\mm$ for all $1\leq i\leq t$  and $\mm_{\nu_i}\cap R\subsetneq \mm$ for all $t+1\leq i\leq r$. Let $\nu_i(K)=b_i$ for all $1\leq i\leq t$. Thus $\nu_i(K^m)=mb_i$ for all $1\leq i\leq t$ and $m\geq 1$. Since $K$ is an $\mm$-primary ideal, we have $(I(\nu_i)_{\lceil nma_i\rceil}:K^m)=I(\nu_i)_{\lceil nma_i\rceil}$ for all $t+1\leq i\leq r$.
\\Let $\mathcal C_n=\{j:1\leq j\leq r\mbox{ and } na_j-b_j>0\}$ and $\mathcal D_n=\{1,\ldots,r\}\setminus \mathcal C_n$ for all $n\geq 1$. Then for all $n\geq 1$, we have 
\begin{equation}{\label{10eq}}\mathcal F(n)=\{F(n)_m=(I_{nm}:K^m)=(\bigcap\limits_{i\in\mathcal C_n}I(\nu_i)_{\lceil (na_i-b_i)m\rceil})\bigcap ({\bigcap\limits_{t+1\leq i\leq r}I(\nu_i)_{\lceil mna_i\rceil}})\}_{m\in\NN}\end{equation} is a  filtration of ideals.
\\$(i)\Rightarrow (ii)$ Suppose $\ell(\mathcal I)=d$. Then by \cite[Theorem 4.1]{CS2024}, $\epsilon(\mathcal I)>0$  and hence $\epsilon(\mathcal F(n))>0$ for all $n\gg0$ by part $(1)$. Therefore $\mm\in\Ass(F(n)_m)$ for all $n,m\gg 0$ 
and hence $\mathcal C_n\neq\emptyset$ for all $n\gg 0$. Thus $\mathcal F(n)$ is a $\QQ$-divisorial filtration of ideals for all $n\gg 0$ and by \cite[Theorem 4.1]{CS2024}, we get $\ell(\mathcal F(n))=d$ for all $n\gg0$. 
	\\The implication $(ii)\Rightarrow (iii)$ is clear. 
	\\$(ii)\Rightarrow (i)$
	Suppose  $\ell(\mathcal F(n))=d$ for some $n\geq 1$, say for $n=n_0$. If $t=r$ then $\mathcal I$ is a filtration of $\mm$-primary ideals and hence by \cite[Theorem 1.5]{C2025}, $\ell(\mathcal I)=d$. Now suppose $t<r$. By the expression (\ref{10eq}), we get, $\mathcal F(n_0)$
	is a $\QQ$-divisorial filtration. Therefore  by \cite[Theorem 4.1]{CS2024}, $\epsilon(\mathcal F(n_0))>0$ and hence by part $(1)$, we have $\epsilon(\mathcal I)>0$. Thus again using \cite[Theorem 4.1]{CS2024}, we get $\ell(\mathcal I)=d$. 
		\end{proof}	
		\section*{Acknowledgments}
		The author was partially supported by ARG MATRICS Grant with Grant No.
		\\ ANRF/ARGM/2025/000862/MTR
		

\begin{thebibliography}{1000000000}
			
			\bibitem{CHST05} S. D. Cutkosky, H. T. H\`{a}, H. Srinivasan, and E. Theodorescu, Asymptotic behavior of the length of local cohomology, Canad. J. Math. 57 (2005), 1178-1192.
			
			\bibitem{CHS}  S.D. Cutkosky, J. Herzog and Hema Srinivasan, Finite generation of algebras associated to powers of ideals,  Math.  Proc.  of the Camb. Phil.  Soc.  148 (2010), 55-72.
			
			\bibitem{C1} S.D. Cutkosky, Asymptotic multiplicities of graded families of ideals and linear series,  Advances in Math. 264 (2014), 55-113.
			
			\bibitem{C2021} S. D. Cutkosky, The Minkowski equality of filtrations, Advances in Math. 388 (2021).
			
			\bibitem{C2025} S. D. Cutkosky, The Rees algebra and analytic spread of a divisorial filtration, Advances in Math. 479 (2025).
			
				\bibitem{CL2024} S. D. Cutkosky and S. Landsittel, Epsilon multiplicity is a limit of Amao multiplicities, J. Algebra Appl. 24 (2025), no. 13-14, Paper No. 2541021, 18 pp
				
				\bibitem{CS2022} S. D. Cutkosky and  P. Sarkar, Analytic spread of filtrations and symbolic algebras, J. Lond. Math. Soc. (2) 106 (2022), no. 3, 2635-2662.
			
			\bibitem{CS} S. D. Cutkosky and  P. Sarkar,  Multiplicities and mixed multiplicities of arbitrary filtrations, Res Math Sci 9, 14 (2022).
		
		\bibitem{CS2024} S. D. Cutkosky and  P. Sarkar, Epsilon multiplicity and analytic spread of filtrations,  Illinois J. Math. 68 (2024), no. 1, 189-210.
		
		\bibitem{DM} S. Das and C. Meng, Asymptotic colength for families of ideals : an analytic approach, to appear in J. Algebra, arXiv:2410.11991. 

		
\bibitem{HPV} J. Herzog, T. J. Puthenpurakal, and J. K. Verma, Hilbert polynomials and powers of ideals, Math. Proc. Cambridge Philos. Soc. 145 (2008), 623-642.
		
		\bibitem{hilb} D. Hilbert,  Ueber die Theorie der algebraischen Formen, Math. Ann., 36, (1890), 473-534.
		
		\bibitem{JMV} J. Jeﬀries, J. Mont\~{a}no, and M. Varbaro Multiplicities of classical varieties, Proc. London Math. Soc. 110 (2015), 1033-1055.
		
		\bibitem{KV} D. Katz and J. Validashti, Multiplicities and Rees valuations, Collect. Math. 61, 1 (2010), 1-24.
		
		\bibitem{Li} J. Lipman, Equimultiplicity, reduction and blowing up,  R.N. Draper (Ed.), Commutative Algebra, Lect. Notes Pure Appl. Math., vol. 68, Marcel Dekker, New York (1982), pp. 111-147.
		
		\bibitem{Mat} H. Matsumura, Commutative Ring Theory, Cambridge University Press, 1986.
		
		\bibitem {samuel} P. Samuel,  La notion de multiplicit\'e en alg\`ebre et en 
		g\'eom\'etrie alg\'ebrique, J. Math. Pures Appl., 30, (1951), 159-274.
		
		\bibitem{Pa2025} P. Sarkar, Multiplicities of weakly graded families of ideals, Bull. Lond. Math. Soc. 57 (2025), no. 8, 2354-2371.
		
		
		\bibitem{Sw} I. Swanson, Powers of ideals, primary decompositions, Artin-Rees lemma and regularity, Math. Ann. 307 (1997), no. 2, 299-314.
		
		\bibitem{HS} I. Swanson and C. Huneke, Integral closure of ideals, rings and modules,
		Cambridge Univ. Press, 2006.
		
		
		\bibitem{UV} B. Ulrich and J. Validashti, Numerical criteria for integral dependence, Math. Proc. Camb. Phil. Soc 151 (2011), 95-102.
		
	\end{thebibliography}
	\end{document}